 \numberwithin{equation}{section}
\theoremstyle{definition}
\newtheorem{thm}{Theorem}[section]
\newtheorem{prop}[thm]{Proposition}
\newtheorem{lemma}[thm]{Lemma}
\newtheorem{definition}[thm]{Definition}
\newtheorem{rem}[thm]{Remark}
\newtheorem*{pf}{Proof of Theorem 1.1}
\title[convergence rate at infinity for the cusp winding spectrum]{Polynomial convergence rate at infinity for the cusp winding spectrum of generalized Schottky groups}
\author{Yuya Arima}
\subjclass[2020]{11K50, 28A80, 37C45, 37A45, 37D35}
 \address{Graduate School of Mathematics, Nagoya University,
Furocho, Chikusaku, Nagoya, 464-8602, Japan}
\email{e-mail: yuya.arima.c0@math.nagoya-u.ac.jp}
\begin{document}

\maketitle
\begin{abstract}
    We show that the convergence rate of the cusp winding spectrum to the Hausdorff dimension of the limit set of a generalized Schottky group with one parabolic generator is polynomial. Our main theorem provides the new phenomenon in which differences in the Hausdorff dimension of the limit set generated by a Markov system cause essentially different results on multifractal analysis. This paper also provides a new characterization of the geodesic flow on the Poinca\'re disc model of two-dimensional hyperbolic space and the limit set of a generalized Schottky group. To prove our main theorem we use thermodynamic formalism on a countable Markov shift, gamma function, and zeta function.     
\end{abstract}
\section{Introduction}
In this paper, we consider the Poincar\'e disc model $(\mathbb{D}, d)$ of the two-dimensional hyperbolic space. For the sake of simplicity, we postpone some technical definitions to the Section 2. Let $G$ be a generalized Schottky group with one parabolic generator generated by $G_0$. We can write $G_0=H_0\cup \Gamma_0$, where $H_0$ is {the non-empty finite} set of hyperbolic generators and $\Gamma_0:=\{\gamma^{\pm1}\}$ is the set of a parabolic generator with the fixed point $p$ which is in the Euclidean boundary of $\mathbb{D}$. Note that $G$ is a non-elementary finitely generated free Fuchsian group. We denote by $\Lambda(G)$ the limit set of $G$ and by $\Lambda_c(G)$ the conical limit set. 
In this setting, the conical limit set is given by 
\begin{align*}
   \Lambda_c(G)=\Lambda(G)\setminus\bigcup_{g\in G}\{g(p)\}. 
\end{align*}
Thus, the set $\Lambda(G)\setminus\Lambda_c(G)$ is a { countable }   set. Therefore, the Hausdorff dimension of $\Lambda_c(G)$ coincides with the Hausdorff dimension of $\Lambda(G)$. Note that we have $1/2<\dim_H(\Lambda_c(G))<1${, where $\dim_H$ denotes the Hausdorff dimension.}
(see Theorem \ref{Theorem Beardon})

{we recall the definition and the motivation of the cusp winding process from \cite{Jaerisch2016AMA} (see also \cite{jaerisch2021mixed} and \cite{arima2024higher}).} Let $R\subset \mathbb{D}$ be the Dirichlet fundamental domain for $G$ at centered $0$. For a conical limit point $x\in\Lambda_c(G)$ we can construct the unique infinite sequence $\omega(x)=\omega_0(x)\omega_1(x)\cdots$ in $G^{\mathbb{N}\cup\{0\}}$ which is associated to $x$ as follows: Consider the oriented geodesic ray $s_x$ from $0$ to $x$. The oriented geodesic $s_x$ intersects the infinitely many copies $R$, $g_0(x)R$, $g_0(x)g_1(x)R$,... of $R$, with $g_i(x)\in G_0$ and $i\in\mathbb{N}\cup\{0\}$. Thus, we obtain the infinite sequence $\omega(x)=g_0(x)g_1(x)g_2(x)\cdots\in G_0^{\mathbb{N}\cup\{0\}}$, which is necessarily reduced, that is, $g_{i-1}(x)g_i(x)\neq I$ for all $i\in\mathbb{N}${, where $I$ denotes the identity map}.
Using the infinite sequence $\omega(x)$ in $G^{\mathbb{N}\cup\{0\}}$, we can define a block sequence which is used to define the cusp winding process as follows: Let $x$ be a conical limit point. 
 We define a block sequence $B_i(x)$,  $i\in \mathbb{N}$, such that
\begin{align*}
\omega(x)=B_1(x)B_2(x)\cdots,    
\end{align*}
 where each $B_i(x)$ is either a hyperbolic generator, or a maximal block of consecutive appearances of the same parabolic generator.
 By construction, for $\gamma\in\Gamma_0$, $l\in\mathbb{N}\cup\{0\}$ and $i\in\mathbb{N}$ a block $B_i(x)=\gamma^{l+1}$ means that the projection of $s_x$ onto $\mathbb{D}/G$ winds
 $l$ times around the cusp $p$. Motivated by counting the number of windings around the cusp $p$, we define the cusp winding process
$(a_{i})_{i\geq1}:\Lambda_{\rm c}(G)\rightarrow \mathbb{N}\cup\{0\}$ by
\begin{align*}
    a_{i}(x)= \left\{
 \begin{array}{cc}
   l   & \text{if}\  B_i(x)=\gamma^{l+1}\ \text{or}\ B_i(x)=\gamma^{-(l+1)},\ l\geq1\\
   0   & \text{otherwise}
 \end{array}
 \right..
\end{align*}
For ${\alpha}\in[0,\infty]$ we define the level set by 
\begin{align*}
    J({\alpha}):=\left\{x\in\Lambda_{\rm c}(G):\lim_{n\to\infty}\frac{1}{n}\sum_{i=1}^{n}a_{i}(x)=\alpha\right\}.
\end{align*}
Using the above level set, we can consider the following multifractal decomposition of the conical limit set $\Lambda_c(G)$:
\begin{align*}
    \Lambda_{\rm c}(G)
=\left(\bigcup_{{\alpha}\in[0,\infty]}J({\alpha}) \right)\cup J_{\text{ir}},
\end{align*}
where $J_{\text{ir}}$ denotes the irregular set, that is, the set of conical limit points $x\in\Lambda_c(G)$ for which the limit $(1/n)\sum_{i=1}^{n}a_{i}(x)$ does not exist. To investigate the growth rate of the number of cusp winding by the Hausdorff dimension, we define the cusp winding spectrum as follows:
\begin{align*}
    b:[0,\infty]\rightarrow [0,\dim_H\Lambda_{\rm c}(G)],\ b({\alpha})=\dim_HJ({\alpha}).
\end{align*}
Also, the function $b$ is simply called the dimension spectrum.
Denote by $f$ the Bowen-Series map associated with the Dirichlet fundamental domain $R$ centered at $0$ and by $\tilde f$ the induced system derived from the Bowen-Series map $f$ as defined in Section 2.1. By the proof of \cite[Proposition 2.2]{Dal}, 
the maximal invariant subset of $\partial \mathbb{D}$
generated by the induced system $\tilde f$ is the conical limit set $\Lambda_{\rm c}(G)$. Thus, we can consider the dynamical system $(\tilde f, \Lambda_c(G))$. Moreover, by the definition of the induced system $\tilde f$ we have $a_1\circ \tilde f^{i-1}=a_i$ for all $i\in\mathbb{N}$ (see Section 2.1). Hence, the dimension spectrum $b$ can be regarded as a Birkhoff spectrum. We call the triple $(\tilde f, \Lambda_c(G), a_1)$ a generalized Schottky system.
The detailed analysis of the multifractal decomposition of the conical limit set and dimension spectra
is obtained from \cite{arima2024higher} (see also \cite{jaerisch2021mixed}). 

By \cite[Proposition 1.2]{arima2024higher}, we have that the dimension spectrum $b$ is strictly increasing and $\lim_{\alpha\to\infty}b(\alpha)=\dim_H(\Lambda_c(G))$. Therefore, it is natural to ask about the convergence rate of $b$ to the Hausdorff dimension of the conical limit set. The following theorem is our main theorem. 
\begin{thm}\label{Theorem main}
    Let $G$ be a generalized Schottky group. We have
    \begin{align}
 \lim_{\alpha\to\infty}(\dim_H({\Lambda_c(G)})-b(\alpha))\alpha^x= \left\{
 \begin{array}{cc}
   \infty   & \text{if}\  {1}/({2-2\dim_H{\Lambda_c(G)}})-1<x\\
   0   & \text{if}\   {1}/({2-2\dim_H{\Lambda_c(G)}})-1>x  \nonumber
 \end{array}
 \right. .        
    \end{align}
\end{thm}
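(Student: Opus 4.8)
Throughout, write $\delta:=\dim_H(\Lambda_c(G))$, so that $1/2<\delta<1$. Regarding $b$ as a Birkhoff spectrum for the generalized Schottky system $(\tilde f,\Lambda_c(G),a_1)$, the conditional variational principle from the framework of \cite{arima2024higher} gives
\begin{equation*}
 b(\alpha)=\sup\left\{\frac{h_\mu(\tilde f)}{\lambda_\mu}:\ \mu\ \tilde f\text{-invariant},\ \int a_1\,d\mu=\alpha\right\},\qquad \lambda_\mu:=\int\log|\tilde f'|\,d\mu.
\end{equation*}
My plan is to prove the two matching one-sided estimates $c_1\,\alpha^{-x_0}\le \delta-b(\alpha)\le c_2\,\alpha^{-x_0}$ for large $\alpha$ and constants $0<c_1\le c_2$, where $x_0:=(2\delta-1)/(2-2\delta)=1/(2-2\delta)-1$; the theorem then follows at once from $(\delta-b(\alpha))\alpha^{x}=\alpha^{x-x_0}\cdot(\delta-b(\alpha))\alpha^{x_0}$. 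The whole analysis is driven by the parabolic generator: a block of $l$ windings is a single symbol of the countable Markov shift whose cylinder has diameter $\asymp l^{-2}$ and on which $|\tilde f'|\asymp l^{2}$, so the parabolic part of every partition sum is a polylogarithm $\sum_{l\ge1}e^{ql}l^{-2t}$.

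For the lower bound on $\delta-b$, introduce the two-variable pressure $P(q,t):=P_{\tilde f}(q\,a_1-t\log|\tilde f'|)$ on the countable Markov shift, finite for $q\le0$ and $t>1/2$, with $P(0,\delta)=0$. For any admissible $\mu$ with $\int a_1\,d\mu=\alpha$ and any $q\le0$, taking $t=\dim\mu=h_\mu/\lambda_\mu$ in the variational principle gives $P(q,\dim\mu)\ge q\alpha$, whence $b(\alpha)\le\inf_{q\le0}t^*(q)$ with $P(q,t^*(q))=q\alpha$. Writing $s:=-q\ge0$ and using the classical polylogarithm asymptotics
\begin{equation*}
 \sum_{l\ge1}e^{-sl}l^{-2t}=\Gamma(1-2t)\,s^{2t-1}+\sum_{k\ge0}\frac{\zeta(2t-k)}{k!}(-s)^{k}\qquad(s\to0^+),
\end{equation*}
together with analytic perturbation theory for the leading eigenvalue of the transfer operator, I will establish
\begin{equation*}
 P(q,t)=-\lambda\,(t-\delta)-|A|\,s^{2\delta-1}+(\text{higher order}),\qquad \lambda:=\int\log|\tilde f'|\,d\mu_\delta\in(0,\infty),
\end{equation*}
where $|A|>0$ because $\Gamma(1-2\delta)<0$ for $1-2\delta\in(-1,0)$. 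Since $2\delta-1\in(0,1)$, this fractional term dominates the smooth $O(q)$ contribution, reflecting that $\int a_1\,d\mu_\delta=\sum_l l\cdot l^{-2\delta}=\infty$. Substituting into $P(q,t^*)=q\alpha$ gives $\delta-t^*(q)=\lambda^{-1}\bigl(|A|s^{2\delta-1}-\alpha s\bigr)$; minimizing $F(s)=\alpha s-|A|s^{2\delta-1}$ yields $s_*\asymp\alpha^{-1/(2-2\delta)}$ and $\delta-t^*(s_*)\asymp\alpha^{-x_0}$, so $\delta-b(\alpha)\ge c_1\alpha^{-x_0}$ and the case $x>x_0$ follows.

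For the upper bound on $\delta-b$ I will exhibit measures realizing the rate. Let $\tilde f_L$ be the subsystem obtained by discarding every winding block of length exceeding $L$, and let $\delta_L$ be its dimension. Truncating the parabolic sum at $l=L$ removes $\sum_{l>L}l^{-2\delta}\asymp L^{1-2\delta}$ from the pressure at $t=\delta$, so linearizing the truncated pressure at $\delta$ gives $\delta-\delta_L\asymp L^{1-2\delta}$. Choosing $L\asymp\alpha^{1/(2-2\delta)}$ and a Bernoulli (or equilibrium) measure $\mu_L$ on the truncated alphabet with $\int a_1\,d\mu_L=\alpha$ — possible since $\sum_{l\le L}l\cdot l^{-2\delta}\asymp L^{2-2\delta}\asymp\alpha$ — and dimension $\delta_L-o(\delta-\delta_L)$, I obtain $b(\alpha)\ge\dim\mu_L=\delta-O(\alpha^{-x_0})$, hence $\delta-b(\alpha)\le c_2\alpha^{-x_0}$ and the case $x<x_0$ follows. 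Equivalently one may use the equilibrium state $\mu_q$ at the optimal $q=q(\alpha)\asymp-\alpha^{-1/(2-2\delta)}$ found above, which simultaneously achieves $\int a_1\,d\mu_q=\alpha$ and $\dim\mu_q=\delta-O(\alpha^{-x_0})$.

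The main obstacle is the key input of the second paragraph: transferring the polylogarithm (hence gamma- and zeta-function) asymptotics of the parabolic partition sum into a genuine non-analytic expansion of the leading eigenvalue $e^{P(q,t)}$, with the correct fractional exponent $2\delta-1$, the correct sign of the coefficient $|A|$ coming from $\Gamma(1-2\delta)<0$, and uniform control of the remainder so that the fractional term provably dominates the analytic (hyperbolic-generator) part throughout a neighbourhood of $(0,\delta)$. This demands a perturbation argument for the transfer operator on the infinite-alphabet Markov shift that is non-smooth in $q$ at $q=0$ — delicate precisely because $\partial_qP(0,\delta)=+\infty$. Justifying the interchange of this singular perturbation with the implicit solution $t^*(q)$, and validating the conditional variational formula by approximation with finitely many symbols, are the steps I expect to require the most care.
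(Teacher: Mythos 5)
Your overall architecture is coherent and even targets more than Theorem \ref{Theorem main} requires: writing $\delta:=\dim_H(\Lambda_c(G))$ and $x_0:=1/(2-2\delta)-1$ as you do, a two-sided estimate $c_1\alpha^{-x_0}\le\delta-b(\alpha)\le c_2\alpha^{-x_0}$ would indeed yield both limits, and your exponent arithmetic (optimizing $|A|s^{2\delta-1}-\alpha s$ at $s_*\asymp\alpha^{-1/(2-2\delta)}$, with value $\asymp\alpha^{-x_0}$) is correct. But both load-bearing steps are asserted rather than proved, and the first is a genuine gap, not a routine verification. The expansion $P(q,t)=-\lambda(t-\delta)-|A|\,|q|^{2\delta-1}+\text{h.o.}$, with a remainder controlled uniformly on a neighbourhood of $(0,\delta)$ and in particular at the moving scale $s_*\to0$ where your optimization takes place, is the entire analytic content of the case $x>x_0$. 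It cannot follow from ``analytic perturbation theory for the leading eigenvalue'': the perturbation is precisely non-analytic, since $\partial_qP(0,\delta)=+\infty$ (because $\int a_1\,d\mu_\delta\asymp\sum_l l^{1-2\delta}=\infty$), and you name no replacement mechanism (Keller--Liverani-type bounds, direct Gibbs/partition-sum estimates, or otherwise) and no way to dominate the analytic error terms by the fractional main term along $s_*\to 0$. You flag this yourself as the ``main obstacle,'' which is exactly the point: without it there is no lower bound on $\delta-b(\alpha)$. The upper bound is in the same state: $\delta-\delta_L\asymp L^{1-2\delta}$ is a Hensley-type theorem that would itself have to be proved for this system (Hensley's argument is specific to the Gauss map), and the measure $\mu_L$ with $\int a_1\,d\mu_L=\alpha$ exactly is claimed to have dimension $\delta_L-o(\delta-\delta_L)$ with no estimate of what imposing the constraint costs; since $L$ is an integer, $\alpha$ generically misses the natural mean $\alpha_L^*\asymp L^{2-2\delta}$ of the truncated equilibrium state, and one must verify (for instance via quadratic behaviour of the truncated spectrum near its maximum, with variance $\asymp L^{3-2\delta}$) that correcting the mismatch costs only $O(L^{-1-2\delta})\ll L^{1-2\delta}$.

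It is instructive to see that the paper's proof is organized so that your obstacle never arises: the pressure is never expanded in $q$ at all. The paper imports from \cite{arima2024higher} the exact identities $p(\alpha,q(\alpha),b(\alpha))=0$ and $\frac{\partial}{\partial q}p(\alpha,q(\alpha),b(\alpha))=0$, together with real-analyticity of $q(\cdot)$ and $b(\cdot)$ (Proposition \ref{prop more important proposition} and Theorem \ref{Theorem main clain of previous paper}). Differentiating the first identity in $\alpha$ and using Ruelle's formula (\ref{eq Ruelle}) gives $b'(\alpha)=q(\alpha)/\lambda(\mu_\alpha)$, hence $\dim_H(\Lambda_c(G))-b(\alpha)\asymp\int_\alpha^\infty q(t)\,dt$ once $\lambda(\mu_\alpha)$ is bounded uniformly (Lemma \ref{Lemma relationship q and b}); the second identity says $\int a_1\,d\mu_\alpha=\alpha$, which the uniform Gibbs property converts into $\alpha\asymp\sum_{l\ge1}e^{-lq(\alpha)}l^{1-2b(\alpha)}$ (Lemma \ref{Lemma Dirichlet seires}); and the Mellin/Gamma/zeta contour shift evaluates that series as $\asymp q(\alpha)^{2b(\alpha)-2}$ (Proposition \ref{proposition main}), crucially with $b(\alpha)$ kept as a parameter and never expanded around the limiting dimension, so that no singular perturbation of any transfer operator is ever needed---only Gibbs estimates uniform in $\alpha$ and elementary integral comparisons at the end. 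The quantities you would have to control to complete your plan are essentially these same ones; if you made your pressure expansion and truncation steps precise, you would in effect be re-proving Lemmas \ref{lemma asymptotic behavior of q}--\ref{Lemma Dirichlet seires} and Proposition \ref{proposition main} in different notation, which is why I would classify your proposal as a plausible blueprint with its two decisive steps missing rather than as a proof.
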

Multifractal analysis has been studied in several settings. We refer the reader to \cite{barreira2008dimension} and \cite{pesin2008dimension} for basic results on multifractal analysis. By studies on multifractal analysis, it is known that the lack of certain conditions leads to strange results on multifractal analysis. For instance, the lack of compactness of a phase space can cause the existence of a point such that a Birhkoff spectrum is not analytic at this point (see \cite{iommi2015multifractal}). Also, the presence of a neutral fixed point can cause the phenomenon in which a Birhkoff spectrum is completely flat (see \cite{jaerisch2021mixed}). However, to our knowledge, there is no known result in which differences in the Hausdorff dimension of the limit set generated by a Markov system cause essentially different results on multifractal analysis. Comparing Theorem \ref{Theorem main} with the result regarding the convergence rate of the Birkhoff spectrum of the arithmetic mean of the continued fraction, we can see that Theorem \ref{Theorem main} exhibits a new phenomenon. To explain this, we introduce the Gauss system. 

Let $T:(0,1]\rightarrow (0,1]$ be the Gauss map defined by $T(x)=1/x-[1/x]$, where $[\cdot]$ denotes the floor function. We define intervals $(I_m)_{m\in\mathbb{N}}$ given by $I_m:=(1/(m+1),1/m]$ for $m\in\mathbb{N}$.  
The Gauss map $T$ is a Markov map with the countable Markov partition $(I_m)_{m\in\mathbb{N}}$ and the limit set generated by $T$ (i.e. $\cup_{(m_0,m_1,\cdots)\in\mathbb{N}^{\mathbb{N}\cup\{0\}}}\cap_{i=0}^\infty T^{-i}I_{m_i}$) is $I:=(0,1)\setminus \mathbb{Q}$. Note that the Hausdorff dimension of $I$ is $1$. We define the digit functions $(\mathfrak{d}_i)_{i\geq 1}$ as $\mathfrak{d}_i(x):=m$ if $T^{i-1}(x)\in I_m$ for $i,m\in\mathbb{N}$. The digit functions are closely related to the continued fraction for a irrational number in $(0,1)$. Note that for $i\in\mathbb{N}$ we have $\mathfrak{d}_i=\mathfrak{d}_1\circ T$ by definition of the digit functions $(\mathfrak{d}_i)_{i\geq 1}$. We call the triple $(T,I,\mathfrak{d}_1)$ the Gauss system.

The Gauss system $(T,I,\mathfrak{d}_1)$ is well-studied from various perspectives. See for example \cite{fan2009khintchine}, \cite{hensley1992continued}, \cite[Section 6]{iommi2015multifractal} and \cite{mauldin1999conformal}.    
 Let $n\in\mathbb{N}$.
 We consider the fully constrained set $E(n)$ by $n$, that is, the set of points $x$ in $I$ such that $\mathfrak{d}_i(x)$ does not exceed $n$ for all $i\in\mathbb{N}$. By \cite{hensley1992continued}, we can completely understand the asymptotic behavior of the function $n\mapsto \dim_H(E(n))$ as follows: 
\begin{align*}
    \dim_H(E(n)) = 1 -\frac{6}{\pi^2 n} - 72 \frac{\log n}{\pi^4n^2} + O\left(\frac{1}{n^2}\right)\ \text{as }n\to\infty,
\end{align*}
where $O$ denotes the Landau's notation. Next, for $\alpha\in[1,\infty)$ we consider the average constrained set $\tilde E(\alpha)$, that is, the set of points $x$ in $I$ such that the average $(1/n)\sum_{i=0}^{n-1}\mathfrak{d}_i(x)$ does not exceed $\alpha$ for all $n\in\mathbb{N}$. By \cite[Theorem 2]{cesaratto2006hausdorff}, the asymptotic behavior of the function $\alpha\mapsto\dim_H(\tilde E(\alpha))$ is also understood as follows:
\begin{align*}
    \dim_H(\tilde E(\alpha))=1-O\left(\left(\frac{1}{2}\right)^{\alpha}\right).
\end{align*}
For ${\alpha}\in[1,\infty]$ we define the level set by 
\begin{align*}
    \tilde J({\alpha}):=\left\{x\in I:\lim_{n\to\infty}\frac{1}{n}\sum_{i=1}^{n}\mathfrak d_{i}(x)=\alpha\right\}
\end{align*}
and the Birkhoff spectrum $\mathfrak{b}:[1,\infty]\rightarrow \mathbb{R}$ given by $\mathfrak{b}(\alpha):=\dim_H(\tilde J(\alpha))$.
We can also obtain the following proposition regarding the convergence rate of the Birkhoff spectrum of the arithmetic mean of the continued fraction $\mathfrak{b}$. 
\begin{prop}\label{prop Gauss}
    For the Gauss system $(T,I,\mathfrak{d}_1)$ and $\alpha\in(1,\infty)$ we 
    have $\mathfrak{b}(\alpha)=\dim_H(\tilde E(\alpha))$. Moreover, we have
\begin{align*}
\mathfrak{b}(\alpha)=1-O\left(\left(\frac{1}{2}\right)^{\alpha}\right).
\end{align*}    
\end{prop}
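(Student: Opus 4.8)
The plan is to reduce the statement to its first assertion, since once $\mathfrak b(\alpha)=\dim_H(\tilde E(\alpha))$ is established the estimate $\mathfrak b(\alpha)=1-O((1/2)^\alpha)$ is immediate from \cite[Theorem 2]{cesaratto2006hausdorff}. Throughout I write $S_n(x)=\sum_{i=1}^n\mathfrak d_i(x)$, and I use that, by the conditional variational principle for the Gauss system (see \cite[Section 6]{iommi2015multifractal} and \cite{mauldin1999conformal}), the spectrum admits the representation $\mathfrak b(\alpha)=\sup\{h_\mu(T)/\lambda_\mu:\int\mathfrak d_1\,d\mu=\alpha\}$ over $T$-invariant Borel probability measures with finite Lyapunov exponent $\lambda_\mu=\int\log|T'|\,d\mu$, and that $\mathfrak b$ is strictly increasing, continuous on $(1,\infty)$, and satisfies $\lim_{\alpha\to\infty}\mathfrak b(\alpha)=1$. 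Introducing the sublevel set $A(\alpha):=\{x\in I:\limsup_{n\to\infty}S_n(x)/n\le\alpha\}$, one has the trivial inclusions $\tilde J(\alpha)\subseteq A(\alpha)$ and $\tilde E(\alpha)\subseteq A(\alpha)$, so it suffices to prove the sandwich $\dim_H A(\alpha)\le\mathfrak b(\alpha)\le\dim_H\tilde E(\alpha)$.

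For the upper bound I would fix $\epsilon>0$ and decompose $A(\alpha)=\bigcup_{N\ge1}A_N$, where $A_N$ consists of those $x$ with $S_n(x)\le(\alpha+\epsilon)n$ for all $n\ge N$. Each $A_N$ is covered at level $n$ by the cylinders $I_{i_1\cdots i_n}$ whose digits satisfy $\sum_{j=1}^n i_j\le(\alpha+\epsilon)n$, and by bounded distortion of the Gauss map their diameters are controlled through the transfer operator $\mathcal L_{q,s}f(x)=\sum_{i\ge1}e^{-qi}(i+x)^{-2s}f(1/(i+x))$. Tilting the constraint by a parameter $q\ge0$ and summing, the $s$-dimensional Hausdorff sum of this cover is dominated by $\exp(q(\alpha+\epsilon)n+nP(-q\mathfrak d_1-s\log|T'|))$, where $P$ denotes the topological pressure on the countable Markov shift. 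Optimising over $q\ge0$ and letting $\epsilon\to0$ yields, by the Legendre duality underlying the variational principle above, $\dim_H A(\alpha)\le\sup_{\beta\le\alpha}\mathfrak b(\beta)$, which equals $\mathfrak b(\alpha)$ because $\mathfrak b$ is increasing.

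For the lower bound, fix $\beta<\alpha$ and choose a $T$-invariant ergodic measure $\mu_\beta$ with $\int\mathfrak d_1\,d\mu_\beta=\beta$ and $h_{\mu_\beta}/\lambda_{\mu_\beta}>\mathfrak b(\beta)-\epsilon$; such a measure exists by the variational principle and may be taken with finite first moment. Since $\alpha>1$, I prepend a block of $N$ consecutive digits equal to $1$, which forces $S_n/n\le1<\alpha$ along the prefix and afterwards provides a linear ``head start'' $(\alpha-1)N$. By the strong law of large numbers the random walk $k\mapsto S'_k-\alpha k$ built from $\mu_\beta$-distributed digits drifts to $-\infty$ almost surely, so its supremum is almost surely finite; hence, for $N$ large, the set $G$ of digit sequences for which the concatenated word satisfies $S_n\le\alpha n$ for every $n$ has positive $\mu_\beta$-measure. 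The prefixing map is a conformal bijection onto a cylinder with bounded distortion, so it preserves Hausdorff dimension, and the restriction of $\mu_\beta$ to $G$ has almost-everywhere local dimension $h_{\mu_\beta}/\lambda_{\mu_\beta}$; consequently the image of $G$ is a subset of $\tilde E(\alpha)$ with $\dim_H\ge\mathfrak b(\beta)-\epsilon$. Letting $\epsilon\to0$ and $\beta\uparrow\alpha$ and using the continuity of $\mathfrak b$ gives $\dim_H\tilde E(\alpha)\ge\mathfrak b(\alpha)$, which together with the previous paragraph proves $\mathfrak b(\alpha)=\dim_H\tilde E(\alpha)$.

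The main obstacle is the upper bound, and specifically the non-compactness of the Gauss system: the cover of $A_N$ ranges over infinitely many digits, so the tilted sum is an incomplete, analytically continued zeta-type series rather than a finite one, and one must verify both that the optimal tilt $q$ keeps the pressure $P(-q\mathfrak d_1-s\log|T'|)$ finite and that the resulting Legendre transform reproduces exactly the variational value $\mathfrak b(\alpha)$, rather than a strictly larger bound coming from escape of mass to infinity. This is precisely where the thermodynamic formalism on the countable Markov shift, together with the analytic control of the relevant zeta and gamma functions, is essential.
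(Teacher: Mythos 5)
Your proposal is correct in outline, but it takes a genuinely different route from the paper. The paper offers essentially no argument: it notes that the first claim is an immediate consequence of \cite[Lemma 4.4 and Proposition 4.6]{iommi2015multifractal}, which already tie the Birkhoff spectrum and the dimension of average-constrained sets to the same conditional variational expression, and then quotes the asymptotics of $\dim_H(\tilde E(\alpha))$ from \cite{cesaratto2006hausdorff}. You instead rebuild the identity $\mathfrak b(\alpha)=\dim_H(\tilde E(\alpha))$ by hand, sandwiching both quantities via the limsup-level set $A(\alpha)$. Your lower bound --- prepending a block of $1$'s, using the SLLN to obtain a positive-$\mu_\beta$-measure set of points whose partial sums never cross the line $\alpha n$, and pushing dimension through the bi-Lipschitz prefixing map --- is complete and is an argument the paper never writes down. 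Your upper bound is the standard tilted-covering/Legendre argument, and the step you flag as the main obstacle (that the optimal tilt reproduces $\mathfrak b(\alpha)$ with no loss coming from escape of mass) is exactly the content of the cited Iommi--Jordan results; it can also be closed directly by convexity: for $s>\mathfrak b(\alpha)$ the convex function $g(q)=q\alpha+P(-q\mathfrak d_1-s\log|T'|)$ tends to $+\infty$ as $q\to\infty$ (compare the pressure with the digit-$1$ fixed point and use $\alpha>1$) and is decreasing for small $q>0$ (the mean digit of the equilibrium state diverges as $q\downarrow 0$, since $\sum_i i\cdot i^{-2s}=\infty$ for $s\le 1$), so its infimum is attained at an interior $q^*$ where Ruelle's formula forces $\int \mathfrak d_1\,d\mu_{q^*}=\alpha$, whence $g(q^*)=h(\mu_{q^*})-s\lambda(\mu_{q^*})<0$; this is the same mechanism as Proposition \ref{prop more important proposition} in the Schottky setting. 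In short, the paper's proof buys brevity by outsourcing everything to the literature, while yours is self-contained and constructive, at the cost of having to supply (as above) the countable-Markov-shift duality that the citations hide.
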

Proposition \ref{prop Gauss} is the immediately consequence from \cite[Lemma 4.4 and Proposition 4.6]{iommi2015multifractal} and \cite[Theorem 1]{cesaratto2006hausdorff}. 

Proposition \ref{prop Gauss} states that the convergence rate of the Birkhoff spectrum of the arithmetic mean of the continued fraction $\mathfrak{b}$ to the Hausdorff dimension of the limit set $I$ is exponential. By \cite{series1985modular}, the generalized Schottky system $(\tilde f, \Lambda_c(G), a_1)$ is analogous to the Gauss system $(T,I,\mathfrak{d}_1)$. 
Thus, if the convergence rate of $b$ to $\dim_H(\Lambda_c(G))$ and the convergence rate of $\mathfrak{b}$ to $\dim_H(I)=1$ are different, then the essential difference in the result of multifractal analysis is caused by the difference between the Hausdorff dimension of $I$ and the Hausdorff dimension of $\Lambda_c(G)$ (Note that $1/2<\dim_H(\Lambda_c(G))<1$). In fact, Theorem \ref{Theorem main} state that the convergence rate of the dimension spectrum $b$ to $\dim_H(\Lambda_c(G))$ is polynomial. Therefore, we can see the new phenomenon in which differences in the Hausdorff dimension of the limit set generated by a Markov system cause an essentially different result in multifractal analysis.\\

Next, we consider Theorem \ref{Theorem main} in terms of hyperbolic geometry. There are numerous studies on  the geodesic flow on hyperbolic surfaces using a Fucshian group. Especially, there are a lot of results on the geodesic flow on a hyperbolic surface and the limit set of a non-elementary finite generated Fuchsian group obtained by performing a multifractal analysis. However, for a non-elementary finite generated Fuchsian group $G$ the relationship between $\dim_H(\Lambda(G))$ and the geodesic flow on the hyperbolic surface obtained from these results is that the supremum of a dimension spectrum including information of the geodesic flow on the hyperbolic surface is $\dim_H(\Lambda(G))$. For example, by \cite{Jaerisch2016AMA}, we known that for a non-elementary finite generated free Fuchsian group $G$ with parabolic generators the maximum of the dimension spectrum describing the fluctuation of a certain asymptotic exponential scaling associated to number of winding around a cusp is $\dim_H(\Lambda(G))$. On the other hand, Theorem \ref{Theorem main} states that for a generalized Schottky group $G$  with one parebolic generator the convergence rate of the dimension spectrum to $\dim_H(\Lambda(G))$ is determined by the $\dim_H(\Lambda(G))$. Since the cusp winding spectrum $b$ includes geometric information of the geodesic flow on $\mathbb{D}/G$, this means that we can also relate geodesic flow on $\mathbb{D}/G$ and the Hausdorff dimension of $\Lambda(G)$ using the convergence rate of the cusp winding spectrum $b$ to $\dim_H(\Lambda(G))$. This is a new characterization of geodesic flow on the $\mathbb{D}/G$ and the limit set of a generalized Schottky group.\\

\textbf{Methods of proofs.}
To prove Theorem \ref{Theorem main}, we relate to thermodynamic formalism, which is also used in \cite{iommi2015multifractal}, with methods of proofs  used in \cite{cesaratto2006hausdorff}. We first define the key function as follows: 
\begin{align*}
 p:\mathbb{R}^3\rightarrow \mathbb{R},\ \ \    
p(\alpha,{q},b):=P({q}(-a_1+\alpha)-b\log|\tilde f'|),\  
\end{align*}
where $P$ denotes the topological pressure (see Definition \ref{Definition of pressure}). By \cite{arima2024higher},  there exists real-analytic function $\alpha\in(0,\infty) \mapsto q(\alpha)\in(0,\infty)$ such that \begin{align}\label{equation G}
    p({\alpha},{q}({\alpha}),b({\alpha}))=0\ \text{and}\  \frac{\partial}{\partial q}p({\alpha},{q}({\alpha}),b({\alpha}))=0.
\end{align}
Using (\ref{equation G}) and real-analyticity of the dimension spectrum $b$ (see Theorem \ref{Theorem main clain of previous paper}), for  $\alpha\in(0,\infty)$ large enough we can relate $\dim_H(\Lambda_c(G))-b(\alpha)$ with $\int_{\alpha}^\infty q(t) dt$. On the other hand, using (\ref{equation G}), Ruelle's formula for the derivative of the pressure, and the Gibbs property, for $\alpha\in(0,\infty)$ large enough we obtain a relationship between $\alpha$ and the Dirichlet series $\sum_{l=1}^\infty\exp(-lq(\alpha)){l^{1-2b(\alpha)}}$.  By Mellin transformation, Gamma function and zeta function, one can show a comparability between $\alpha^{1/(2-2b(\alpha))}$ and $q(\alpha)$ for large enough $\alpha\in(0,\infty)$. By those relations, we can show Theorem \ref{Theorem main}.\\  

\textbf{Plan of the paper.}

 In the Section \ref{Section Discrete geometry}, we introduce the precise definition on Discrete geometry which is used in this paper.

 In the Section \ref{Section Thermodynammic formalism}, we first describe thermodynamic formalism for Countable Markov Shift and a coding between Countable Markov Shift and the dynamical system $(\tilde f,\Lambda_c(G))$. Then, we explain thermodynamic formalism for the dynamical system $(\tilde f,\Lambda_c(G))$.

 In the Section \ref{Section Summary of the results}, we recall some results from \cite{arima2024higher}. This result play a fundamental rule in this paper.

 In the section \ref{section proof of the main theorem}, we first show some technical lemma. By these lemmas, we obtain the comparability between $\alpha^{1/(2-2b(\alpha))}$ and $q(\alpha)$ for large enough $\alpha\in(0,\infty)$. Then, using these lemmas, we prove Theorem \ref{Theorem main}.

\section{Preliminaries}
\subsection{The Bowen-Series map and multi-cusp winding process}\label{Section Discrete geometry}

{In this section, we will first give some definitions of hyperbolic geometry and the notation used throughout this paper.} We refer to the reader \cite{beardon2012geometry},\cite{Dal} and \cite{katok1992fuchsian} for details on discrete geometry. Let $(\mathbb{D},d)$ denote the Poincar\'{e} disc model of two-dimensional hyperbolic spaces. We denote by Conf$(\mathbb{D})$ the set of orientation-preserving isometries of $(\mathbb{D},d)$. Recall that, in this setting, Conf$(\mathbb{D})$ is the set of M\"{o}bius transformations (see \cite[Proposition 1.1]{Dal}) and each element $g$ of Conf$(\mathbb{D})$ is classified as hyperbolic, parabolic and elliptic using fix points of $g$. A element $h$ of Conf$(\mathbb{D})$ is called a hyperbolic element if $h$ has two fixed points in $\partial \mathbb{D}$ , where $\partial \mathbb{D}$ denotes the Euclidean boundary of $\mathbb{D}$. A element $\gamma$ of Conf$(\mathbb{D})$ is called a parabolic element if $\gamma$ has one fixed point in $\partial \mathbb{D}$. Note that for a parabolic element $\gamma\in G$ and its fixed point $p\in\partial \mathbb{D}$ we have $|\gamma'(p)|=1$, where $|\cdot|$ denotes the Euclidean metric norm of $\mathbb{R}^2$. A element $\phi$ of Conf$(\mathbb{D})$ is called a elliptic element if $\phi$ has  one fixed point in  $\mathbb{D}$.  A subgroup $G$ of Conf$(\mathbb{D})$ is called a Fuchsian group if $G$ is a discrete subgroup of Conf$(\mathbb{D})$. Let $G$ be a Fuchsian group. We define the limit set of $G$ by 
\begin{align*}
\Lambda(G):=\bigcup_{g\in G}\overline{\{g0\}},  
\end{align*}
where $\overline{\{g0\}}$ denotes the Euclidean closure of $\{g0\}$ for $g\in G$. $G$ is said non-elementary if the limit set of $G$ is not a finite set. A limit point $x\in \Lambda(G)$ is called a conical limit point if  there exists $(g_n)_{n\in\mathbb{N}}\subset G$ such that $\lim_{n\to\infty}g_n0=x$ and the sequence $(\inf_{z\in[0,x)}d(g_n0,z))_{n\in\mathbb{N}}$ is bounded, where $[0,x)$ denotes the geodesic ray connecting $0$ and $x$. We denote by $\Lambda_c(G)$ the set of conical limit points.\\

Next, we introduce the definition of a generalized Schottky group. For $g\in$Conf$(\mathbb{D})$ we define the isometry circle of $g$ as $\Delta(g):=\{z\in\partial\mathbb{D}:|g'(z)|\geq1\}$. Let $n,m\in\mathbb{N}$. Let $H_0:=\{h_1^{\pm1},\cdots,h_n^{\pm1}\}\subset$Conf$(\mathbb{D})$ be the set of hyperbolic generators and 
let $\Gamma_0:=\{\gamma_1^{\pm1},\cdots,\gamma_m^{\pm1}\}\subset$Conf$(\mathbb{D})$ be the set of parabolic generators. We put $G_0=H_0\cup\Gamma_0$ and write $G_0=\{g_1^{\pm1},\cdots,g_{n+m}^{\pm1}\}$. We assume that  $\overline{\Delta({g_i})\cup \Delta({g_i^{-1}})}\cap \overline{{\Delta({g_j})\cup \Delta({g_j^{-1}})}}= \emptyset$
for $i,j\in\{1,2,\cdots,m+n\}$ with $i\neq j$. Let $G$ be the subgroup of Conf$(\mathbb{D})\}$ generated by $G_0$. $G$ is called a generalized Schottky group with $m$ parabolic generators generated by $G_0$. Let $G$ be a generalized Schottky group with $m\geq1$ parabolic generators generated by $G_0$. Note that a generalized Schottky group is a non-elementary finite generated free Fuchsian group with respect to the generator $G_0$ (see \cite[Proposition 1.6]{Dal}). Furthermore, the Dirichlet fundamental domain of $G$ centered at $0$ is given by 
\begin{align*}
    \bigcap_{g\in G_0}\{z\in\mathbb{D}:|g'(z)|<1\}.
\end{align*}
Since the generalized Schottky group $G$ is finite generated, the conical limit set $\Lambda_c(G)$ is given by the following form (see \cite[Theorem 10.2.5]{beardon2012geometry}):
\begin{align}\label{eq decomposition of the limit set}
    \Lambda_c(G)=\Lambda(G)\setminus\bigcup_{g\in G}\bigcup_{i=1}^m\{g(p_i)\},
\end{align}
where $p_i\in\partial \mathbb{D}$ denotes the fixed point of the parabolic generator $\gamma_i\in G_0$ for $i\in\{1,\cdots, m\}$. Thus, the set $\Lambda(G)\setminus\Lambda_c(G)$ is a countable set. This implies that the Hausdorff dimension of $\Lambda_c(G)$ is equal to the Hausdorff dimension of $\Lambda(G)$. For a subset $A$ of $\partial\mathbb{D}$ we denote by $\dim_H(A)$ the Hausdorff dimension of $A$. We have the following fundamental fact.
\begin{thm}\cite[Theorem 14.3]{borthwick2007spectral}\label{Theorem Beardon}
For a non-elementary finite generated Fuchsian group $G$ containing parabolic elements we have $1/2<\dim_H(\Lambda_c(G))<1$.
\end{thm}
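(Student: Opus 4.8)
The plan is to pass from the purely metric quantity $\dim_H(\Lambda_c(G))$ to the group-theoretic exponent of convergence $\delta(G)$ of the Poincar\'e series $\sum_{g\in G}e^{-s\,d(0,g0)}$, and then to bound $\delta(G)$ using the generators of $G$. Since $G$ is non-elementary and finitely generated it is geometrically finite, so Patterson--Sullivan/Bishop--Jones theory gives $\dim_H(\Lambda_c(G))=\delta(G)$; it therefore suffices to show $1/2<\delta(G)<1$.

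For the lower bound I would isolate the contribution of a parabolic generator $\gamma\in\Gamma_0$ with fixed point $p$. Writing $\gamma$ in parabolic normal form and using the distance formula in $(\mathbb{D},d)$, one gets $\cosh d(0,\gamma^n 0)\asymp n^2$, hence $d(0,\gamma^n 0)=2\log|n|+O(1)$ as $|n|\to\infty$. Consequently the subseries $\sum_{n\in\mathbb{Z}}e^{-s\,d(0,\gamma^n 0)}\asymp\sum_n|n|^{-2s}$ diverges precisely for $s\le 1/2$, so the exponent of convergence of the cyclic group $\langle\gamma\rangle$ is exactly $1/2$ and $\delta(G)\ge 1/2$. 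To upgrade this to the strict inequality $\delta(G)>1/2$ I would use that $G$ is non-elementary: choosing a hyperbolic $h\in H_0$ with $h(p)\ne p$, the orbit points $\{h\gamma^n 0\}$ together with $\{\gamma^n 0\}$ supply enough extra accumulation near the cusp that the full Poincar\'e series still diverges at $s=1/2$. This is Beardon's theorem that a finitely generated Fuchsian group containing a parabolic element has exponent of convergence strictly larger than $1/2$.

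For the upper bound I would use that a generalized Schottky group is free, so the quotient $\mathbb{D}/G$ has infinite hyperbolic area and $G$ is not a lattice. By Sullivan's dichotomy for geometrically finite groups, $\delta(G)=1$ would force $\Lambda(G)=\partial\mathbb{D}$ and finite covolume; since this fails here, $\delta(G)<1$. Alternatively one can argue directly from the Schottky data: the disjointness of the closed isometry circles $\overline{\Delta(g)\cup\Delta(g^{-1})}$ means $\Lambda(G)$ is covered by finitely many disjoint arcs whose complement has positive length, and a standard covering/pressure estimate for the associated contracting iterated function system yields a uniform dimension bound below $1$.

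The main obstacle is obtaining the two \emph{strict} inequalities rather than the soft bounds $1/2\le\delta(G)\le 1$. The strict lower bound is the more delicate point: it genuinely uses the interaction between the parabolic and a hyperbolic element, since the cyclic parabolic group alone only gives $\delta=1/2$, so one must control how many orbit points accumulate near the cusp at the critical scale $s=1/2$. The strict upper bound, by contrast, reduces to the non-lattice property, which is immediate for a free group. Since the statement is classical, in practice I would simply invoke the Patterson--Sullivan identity together with Beardon's lower bound and Sullivan's upper bound, as recorded in \cite[Theorem 14.3]{borthwick2007spectral}.
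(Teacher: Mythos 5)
This statement is not proved in the paper at all: it is imported verbatim as a citation to Borthwick \cite[Theorem 14.3]{borthwick2007spectral}, so there is no internal argument to compare yours against, and your closing move of simply invoking that reference is exactly what the paper does. Your reconstruction of what sits behind the citation is the standard one (Bishop--Jones to identify $\dim_H(\Lambda_c(G))$ with the exponent of convergence $\delta(G)$, Beardon's lower bound $\delta(G)>1/2$ in the presence of parabolics, and the Patterson--Sullivan upper bound $\delta(G)<1$ for groups of the second kind), and it is essentially sound; two refinements are worth recording. First, for the strict lower bound, your stated mechanism --- that the orbit points $\{h\gamma^n 0\}\cup\{\gamma^n 0\}$ make the full Poincar\'e series ``still diverge at $s=1/2$'' --- only yields $\delta(G)\ge 1/2$, not the strict inequality; what Beardon's argument actually exploits is that the cyclic parabolic subgroup is of \emph{divergence type} at its own exponent $1/2$ (since $\sum_n |n|^{-1}=\infty$), so that summing over alternating words in the free product $\langle\gamma\rangle\ast\langle h\rangle\subset G$ forces divergence at exponents strictly larger than $1/2$. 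You correctly flag this as the delicate point and defer to Beardon, but the sentence as written does not close it. Second, your upper bound correctly uses that a generalized Schottky group is free, hence non-lattice with limit set a proper subset of $\partial\mathbb{D}$; note that this hypothesis is genuinely needed and is silently present in the cited source, since for a lattice such as $\mathrm{PSL}(2,\mathbb{Z})$ --- which is non-elementary, finitely generated and contains parabolics --- one has $\dim_H(\Lambda_c(G))=1$. So the statement as printed in the paper implicitly carries Borthwick's standing infinite-area (second kind) hypothesis, and your specialization to the Schottky setting is not a loss of generality but the correct reading of the statement.
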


In this paper, we always assume that $G$ is a generalized Schottky group with one parabolic generator generated by $G_0$. We recall definitions of the Bowen-Series map with respect to the Dirichlet fundamental domain $R$ centered at $0$ and the cusp-winding process. To do this, we put 
    $\Delta:=\bigcup_{g\in G_0}\Delta(g).$
\begin{definition}\cite{bowen1979markov}
    The Bowen-Series map with respect to the Dirichlet fundamental domain $R$ centered at $0$  is given by 
\begin{align}
   f:\Delta \rightarrow \partial \mathbb{D}, \ f|_{\Delta({g})}=g\ \ (g\in G_0).\nonumber
\end{align}
\end{definition}
The Bowen-Series map $f$ with respect to the Dirichlet fundamental domain $R$ centered at $0$ is simply called the Bowen-Series map.
Since the definition of a generalized Schottky group, for $h,\tilde h \in H_0$ we have $\Delta(h)\cap \Delta(\tilde h)=\emptyset$ and for $\gamma\in\Gamma_0$ we have $\Delta(\gamma^{-1})\cap\Delta(\gamma)=\{p\}$, where $p$ is the fixed point of $\gamma\in\Gamma_0$. Thus, the definition of the Bowen-Series map is well-defined. Also, note that  $\Lambda(G)$ is a $f$-invariant set.
\begin{rem}
By the choice of the fundamental domain, there exist constants $W>Z>1$ such that for all $h\in H_0$ and $x\in\Delta(h)\cap\Lambda(G)$ we have
    \begin{align}\label{eq uniformaliy bound }
        Z\leq |f'(x)|\leq W.
    \end{align}
\end{rem}
  For all $x\in \Lambda_c(G)$ there uniquely exists  $\omega(x)=\omega_0\omega_1\cdots \in G_0^{\mathbb{N}\cup\{0\}}$ such that $f^n(x)\in\Delta(\omega_n)$ and if $\omega_n$ is parabolic for some $n\in\mathbb{N}\cup\{0\}$ then there exists $m\in\mathbb{N}$ such that $m>n$ and $\omega_m\neq\omega_n$. Then, $\omega(x)$ defines a sequence of blocks $B_i$ ($i\in \mathbb{N}$) such that
$\omega(x)=B_1(x)B_2(x)\cdots$,
 where each $B_i(x)$ is either a hyperbolic generator, or a maximal block of consecutive appearances of the same parabolic generator.

\begin{definition}\cite{Jaerisch2016AMA}
    The cusp winding process is given by
\begin{align} 
(a_{i})_{i\geq1}:\Lambda_c(G)\rightarrow \mathbb{N}\cup\{0\},\  
 a_{i}(x)= \left\{
 \begin{array}{cc}
   m   & \text{if}\  B_i(x)=\gamma^{{m+1}} (\gamma \in \Gamma_0 )\\
   0   & \text{otherwise}\nonumber
 \end{array}
 \right. .
\end{align}
\end{definition}
Next, we describe the definition of the induced system derive from the Bowen-Series map. We define
\begin{align}
\mathcal{A}:=\bigcup_{l=1}^{\infty} \lbrace \gamma^{l}h:\gamma \in \Gamma_0,h \in H_0\rbrace \cup H_0\nonumber
\end{align}
and  the set
$\Delta({\omega}):=\Delta({\omega_0}) \cap f^{-1}\Delta({\omega_1}) \cap \cdots \cap f^{-(n-1)}\Delta({\omega_{n-1}})$ for $\omega=\omega_0\cdots\omega_{n-1} \in \mathcal{A}$ and $n\in\mathbb{N}$.

\begin{definition}
Define the inducing time $\tau:\mathcal{A}\rightarrow \mathbb{N}$ by $\tau(\gamma^{l+1}h)={l+1}$ ($\gamma \in \Gamma_0,\ h\in H_0$ and $l\in\mathbb{N}$) and $\tau|_{H_0}=1$. The induced Markov map with the Markov partition $\lbrace \Delta({\omega}) \rbrace_{\omega \in A}$ is given by
\begin{align}
\tilde f: \bigcup_{\omega \in A} \Delta(\omega)  \rightarrow \partial \mathbb{D}, \ \ \tilde f|_{\Delta({\omega})}=f^{\tau(\omega)}. \nonumber
\end{align}
\end{definition}

Note that  $a_{i}=a_{1}\circ \tilde f^{i-1} $ for $i\geq 1$ and the maximal $\tilde f$-invariant set is the conical limit set $\Lambda_c(G)$ by (\ref{eq decomposition of the limit set}) (see the proof of \cite[Proposition 2.2]{Dal}). In this paper, we denote simply $\tilde f|_{\Lambda_c(G)}$ as $\tilde f$ and consider the pair $(\tilde f, \Lambda_c(G))$ as a dynamical system.

\begin{rem}\label{Remark uniformaly expanding}
   Since for the fixed point $p\in\Lambda(G)$ of $\gamma\in\Gamma_0$ we have $f(p)=p$ and $|f'(p)|=|\gamma'(p)|=1$ (i.e. $p$ is a neutral fixed point of $f$), $f$ is not uniformaly expanding. But, for all $h\in H_0$ and 
all $\gamma\in\Gamma_0$ we have $\lim_{l\to\infty}\inf\{|\tilde f'(x)|:x\in\Delta(\gamma^lh)\}=\infty$ (see \cite[Lemma 4.1]{arima2024higher}). Therefore, by (\ref{eq uniformaliy bound }), $\tilde f$ is uniformaly expanding. 
\end{rem}

   %We denote by $M({\tilde f})$ the set of $\tilde f$-invariant 
%Borel probability measures on $\Lambda_c(G)$. For $\mu\in M(\tilde f)$ we denote by $h(\mu)$ the measure-theoretic entropy and by $\lambda(\mu):=\int \log|\tilde f'| d \mu$ the Lyapunov exponent. Since $\tilde f$ is uniformly expanding, for all $\mu\in M(\tilde f)$ we have $\lambda(\mu)>0$. 

     For $\alpha\in [0,\infty]$ we define the level sets by
    \begin{align}
    J({\alpha}):=\left\{ x \in \Lambda_c(G):\lim_{n\to \infty}\frac{1}{n}\sum_{i=1}^{n}a_{i}(x) = {\alpha} \right\},\nonumber
    \end{align}
    and the dimension spectrum 
\begin{align}
   b:[0,\infty]\rightarrow\mathbb{R},\ \ b({\alpha}):=\dim_HJ({\alpha}).\nonumber
\end{align}

 \subsection{Thermodynamic formalism}\label{Section Thermodynammic formalism}
In this section, we describe the thermodynamic formalism. For details on the thermodynamic formalism we refer to the reader \cite[Section 2]{mauldin2003graph} and \cite[section 17]{urubanskinoninvertible}.

Recall that $\tilde f $ is a Markov map. Thus, $\tilde f $ determines a $\mathcal{A}\times \mathcal{A}$ matrix $A$ by $A_{a,b}=1$ if $\Delta_b\subset\tilde f \Delta_a$ and $A_{a,b}=0$ otherwise. Define
\begin{align}
    \Sigma_A:=\lbrace \omega \in \mathcal{A}^{\mathbb{N}\cup\{0\}}
    :A_{\omega_{n-1},\omega_{n}}=1,\ n\in \mathbb{N} \rbrace\nonumber.
\end{align}
 A string $(\omega_0,\omega_1,\ldots,\omega_{n-1})\in\mathcal{A}^n$ is called an admissible word of length $n$ if $A_{\omega_{i-1},\omega_{i}}=1$ for all $i=0,\ldots,n-1$. 
We denote by $E^n$ the set of all admissible words of length $n$ for $n\in\mathbb{N}$
and by $E^*$ the set of all admissible words which have a finite length (i.e. $E^*=\cup_{n\in\mathbb{N}}E^n$).
For convenience, put $E^0=\lbrace \emptyset \rbrace$.
For $\omega \in E^n$ we define the cylinder set of $\omega$ by $[\omega]:=\lbrace \tau \in \Sigma_A:\tau_i=\omega_i, 0\leq i\leq n-1\rbrace$. 
Note that $\Sigma_A$ is finitely primitive since $hg\in E^*$, $gh\in E^*$ for all $h\in H_0$ and $g\in\mathcal{A}\setminus\{h^{-1}\}$. 

We endow $\Sigma_A$ with the topology generated by the cylinders. 
% For $s\in(0,1)$ we define the metric $d_s:\Sigma_A\times\Sigma_A\rightarrow \mathbb{R}$ on $\Sigma_A$ defined by 
%\begin{align*}
 %    d_s(\omega,\omega')= \left\{
 %\begin{array}{cc}
  % {s^{k}}   & \text{if}\  \omega_i=\omega_i'\ \text{for\ all}\ i=0,\cdots,k-1\ \text{and}\ \omega_{k}\neq\omega_{k}'\\
  % 0   & \text{if}\ \omega=\omega'
 %\end{array}
 %\right. .
%\end{align*}
%For $s\in(0,1)$ the metric $d_s$ generates the same topology as the topology generated by the cylinders.
Since $\Sigma_A$ is finitely primitive and $\mathcal{A}$ is not a finite set, $\Sigma_A$ is not locally compact.
\begin{definition}
    We define the shift map $\sigma:\Sigma_{A}\rightarrow\Sigma_{A}$ by 
    \begin{align*}
        \sigma((\omega_0,\omega_1,\omega_2\cdots))=(\omega_1,\omega_2,\cdots)\ \text{for}\ (\omega_0,\omega_1,\omega_2\cdots)\in\Sigma_{A}
    \end{align*}
\end{definition}
Since $\tilde f$ is uniformaly expanding and for each $\omega\in\mathcal{A}$ the set $\Delta(\omega)$ is a compact set, for any $\omega=(\omega_0,\omega_1,\cdots) \in \Sigma_A$ the set 
    $\bigcap_{j=0}^{\infty}{\tilde{f}^{-j}\Delta(\omega_j)}\nonumber$
    is a singleton.
Thus, we can define the coding map $\pi:\Sigma_A\rightarrow \pi(\Sigma_A)$ given by 
\begin{align}
    \pi(\omega)\in\bigcap_{n=0}^\infty{\tilde{f}^{-j}\Delta(\omega_j)},\ \ \ (\omega=(\omega_0,\omega_1,\cdots) \in \Sigma_A)\nonumber.
\end{align}
\begin{rem}\label{Remark isomorphism}
    Since for all $\omega,\tau\in\mathcal{A}$ we have $\Delta(\omega)\cap\Delta(\tau)=\emptyset$, the coding map $\pi$ is homeomorphism. Moreover, the coding map $\pi$ satisfies $\tilde{f}(\pi(\omega))=\pi(\sigma(\omega))$ for $\omega \in \Sigma_A$ and $\pi(\Sigma_A)=\Lambda_{\rm c}(G)$. Thus, we can consider the dynamical system $(\Sigma_A,\sigma)$ instead of $(\tilde f,\Lambda_c(G))$.
\end{rem}

A function $\phi$ on $\Sigma_A$  is called weakly H\"older if there exist $Z>0$ and $t\in(0,1)$ such that 
\begin{align*}
    \sup \lbrace |\phi(\omega)-\phi(\tau)|:\omega,\tau\in\Sigma_A,\ \omega_i=\tau_i\ \text{for}\ 0\leq i\leq n-1\rbrace\leq Zt^n.
\end{align*}
\begin{rem}
     Since $\tilde f$ is uniformaly expanding (see Remark \ref{Remark uniformaly expanding}), $\log|\tilde f'\circ \pi|$ (see \cite[Lemma 4.2.2]{mauldin2003graph}) is weakly H\"older. Since for all $n\geq 2$, $\omega\in E^n$ and $\tau_1,\tau_2\in[\omega]$ we have $a_1\circ\pi(\tau_1)=a_1\circ\pi(\tau_2)$, the cusp winding process $a_1\circ\pi$ is weakly H\"older. Thus, for all $(x,y)\in\mathbb{R}^2$ the potential $xa_1\circ\pi+y\log|\tilde f'\circ \pi|$ is also weakly H\"older. 
\end{rem}
 \begin{definition}[{\cite{mauldin2003graph}}]\label{Definition of pressure}
Let $\phi:\Sigma_A\rightarrow \mathbb{R}$ be a continuous function. The topological pressure of $\phi$ is defined by 
 \begin{align}
     P(\phi):=\lim_{n\to\infty}\frac{1}{n}\log \sum_{\omega\in E^n}\exp\left(\sup_{\tau\in[\omega]}\sum_{j=0}^{n-1}\phi(\sigma^i(\tau))\right)\nonumber.
 \end{align}
\end{definition}
The Topological pressure satisfies the following variational principle. 
\begin{thm}[{\cite[Theorem 2.1.8]{mauldin2003graph}}]
Let $\phi$ be a continuous function on $\Sigma_{A}$. We have 
    $P(\phi)=\sup \left\{h(\mu)+\int \phi d\mu \right\},\nonumber$
    where the supremum is taken over all $\sigma$-invariant ergodic Borel probability measures $\mu$ supported by $\Sigma_A$ satisfying $\int \phi d\mu>-\infty$ and $h(\mu)$ is the measure-theoretic entropy with respect to $\sigma$.
\end{thm}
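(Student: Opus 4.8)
The plan is to establish the two inequalities
\[
P(\phi)\ \ge\ \sup_{\mu}\Big\{h(\mu)+\int\phi\,d\mu\Big\}\qquad\text{and}\qquad P(\phi)\ \le\ \sup_{\mu}\Big\{h(\mu)+\int\phi\,d\mu\Big\}
\]
separately, the supremum running over $\sigma$-invariant ergodic Borel probability measures with $\int\phi\,d\mu>-\infty$. Throughout I write $S_n\phi:=\sum_{j=0}^{n-1}\phi\circ\sigma^{j}$ and let $\mathcal P:=\{[a]:a\in\mathcal A\}$ be the partition of $\Sigma_A$ into one-symbol cylinders, with refinements $\mathcal P^n:=\bigvee_{j=0}^{n-1}\sigma^{-j}\mathcal P$ whose atoms are the cylinders $[\omega]$, $\omega\in E^n$. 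Because the cylinders generate the topology and $\pi$ is a homeomorphism (Remark \ref{Remark isomorphism}), $\mathcal P$ is a generating partition, so $h(\mu)=h(\mu,\mathcal P)=\lim_{n\to\infty}\tfrac1n H_\mu(\mathcal P^n)$, where $H_\mu(\mathcal P^n)=-\sum_{\omega\in E^n}\mu([\omega])\log\mu([\omega])$. I may also reduce to the case $P(\phi)<\infty$, since otherwise the lower bound already gives the claim.

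For the lower bound I would fix an admissible ergodic $\mu$ and apply the elementary inequality $\sum_i p_i(a_i-\log p_i)\le\log\sum_i e^{a_i}$ (valid for any probability vector $(p_i)$ and reals $(a_i)$, and a direct consequence of the concavity of $\log$, i.e.\ of the nonnegativity of relative entropy) with $p_\omega=\mu([\omega])$ and $a_\omega=\sup_{\tau\in[\omega]}S_n\phi(\tau)$ for $\omega\in E^n$. This would yield
\[
H_\mu(\mathcal P^n)+\sum_{\omega\in E^n}\mu([\omega])\sup_{\tau\in[\omega]}S_n\phi(\tau)\ \le\ \log\sum_{\omega\in E^n}\exp\Big(\sup_{\tau\in[\omega]}S_n\phi(\tau)\Big).
\]
Since $\sup_{\tau\in[\omega]}S_n\phi(\tau)\ge S_n\phi(x)$ for every $x\in[\omega]$, the left-hand sum is bounded below by $\int S_n\phi\,d\mu=n\int\phi\,d\mu$ by $\sigma$-invariance, while the weak H\"older (bounded-distortion) control on $\phi$ keeps the suprema finite. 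Dividing by $n$ and letting $n\to\infty$ then gives $h(\mu)+\int\phi\,d\mu\le P(\phi)$.

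For the upper bound I would pass to finite subsystems, and this is where the main obstacle lies: since $\Sigma_A$ is not locally compact, the usual separated/spanning-set argument of the compact variational principle is unavailable. For a finite subalphabet $F\subset\mathcal A$, set $\Sigma_F:=\{\omega\in\Sigma_A:\omega_i\in F\text{ for all }i\}$, a compact subshift of finite type on which the classical variational principle of Walters applies:
\[
P_{\Sigma_F}(\phi|_{\Sigma_F})=\sup\Big\{h(\mu)+\int\phi\,d\mu:\mu\in M_\sigma(\Sigma_F)\Big\},
\]
where $M_\sigma(\cdot)$ denotes the set of $\sigma$-invariant probability measures. Every such $\mu$ is also $\sigma$-invariant on $\Sigma_A$, so the right-hand side is dominated by the supremum in the theorem. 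The crux is then the finite-approximation property
\[
P(\phi)=\sup\{P_{\Sigma_F}(\phi|_{\Sigma_F}):F\subset\mathcal A\ \text{finite}\},
\]
whose nontrivial inequality ``$\le$'' I would prove using that $\Sigma_A$ is finitely primitive: a fixed finite connecting set lets one close long admissible words into admissible words supported, up to bounded overhead, on a large finite subalphabet, so that $\tfrac1n\log Z_n(\phi)$ is approximated from below by the partition functions of the subsystems $\Sigma_F$. Granting this, combining the displays yields $P(\phi)\le\sup_\mu\{h(\mu)+\int\phi\,d\mu\}$, and together with the lower bound we obtain equality. Restricting the supremum to \emph{ergodic} measures costs nothing, since $h(\mu)+\int\phi\,d\mu$ is affine in $\mu$, so by the ergodic decomposition the supremum over all invariant measures equals that over the ergodic ones.
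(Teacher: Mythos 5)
The paper itself contains no proof of this theorem: it is quoted, with citation, from \cite[Theorem 2.1.8]{mauldin2003graph}, so the only meaningful comparison is with that source's argument. Your scheme is essentially that argument: the inequality $h(\mu)+\int\phi\,d\mu\le P(\phi)$ for ergodic $\mu$ proved directly on cylinder partitions (your route via the convexity inequality $\sum_i p_i(a_i-\log p_i)\le\log\sum_i e^{a_i}$ is one of the two standard ways to do this step), and the reverse inequality via exhaustion of $\Sigma_A$ by the compact subshifts $\Sigma_F$ over finite subalphabets, the classical variational principle on each $\Sigma_F$, and the approximation property $P(\phi)=\sup_F P_{\Sigma_F}(\phi)$ --- which is exactly where finite primitivity enters in \cite{mauldin2003graph}. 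So the approach is the same as the source the paper relies on, and the overall structure is sound.

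Three points deserve attention if you write this out in full. First, your reduction to the case $P(\phi)<\infty$ is backwards: when $P(\phi)=\infty$ the inequality $P(\phi)\ge\sup_\mu\{h(\mu)+\int\phi\,d\mu\}$ is vacuous, and what must be shown is precisely that the supremum is infinite; this \emph{does} follow from your upper-bound argument (the pressures $P_{\Sigma_F}(\phi)$ are then unbounded, and each is a supremum over measures carried by $\Sigma_F\subset\Sigma_A$), so simply delete the reduction rather than rely on it. Second, $h(\mu)=h(\mu,\mathcal{P})$ for a \emph{countable} generating partition is only guaranteed when $H_\mu(\mathcal{P})<\infty$; what your argument actually needs is just $h(\mu)\le\lim_{n}\frac{1}{n}H_\mu(\mathcal{P}^n)$, which you can get for every finite partition $Q$ from $h(\mu,Q)\le h(\mu,\mathcal{P}^n)+H_\mu(Q\mid\mathcal{P}^n)$, the identity $h(\mu,\mathcal{P}^n)=h(\mu,\mathcal{P})$, and martingale convergence of $H_\mu(Q\mid\mathcal{P}^n)$ to $0$ as $n\to\infty$, since $\mathcal{P}$ generates the Borel $\sigma$-algebra. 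Third, the gluing step behind $P(\phi)\le\sup_F P_{\Sigma_F}(\phi)$ requires a finite lower bound for Birkhoff sums of $\phi$ along the connecting words; cylinders are not compact, so mere continuity of $\phi$ (the hypothesis as transcribed in the paper's statement) does not supply it. This is why \cite{mauldin2003graph} assumes $\phi$ acceptable, i.e.\ of uniformly bounded oscillation on one-cylinders --- a hypothesis satisfied by every potential to which the paper applies the theorem, and the same control you implicitly invoke (``weak H\"older / bounded distortion'') in your lower bound. With these repairs your proof is a faithful reconstruction of the cited one.
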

Let $\phi$ be a continuous function on $\Sigma_{A}$.
    A $\sigma$-invariant Borel probability measure $\mu$ supported by $\Sigma_A$ is called an equilibrium measure for $\phi$ if 
         $P(\phi)= h(\mu)+\int \phi d\mu\nonumber.$
\begin{definition}
    A Borel probability measure $\mu$ supported by $\Sigma_A$ is called a Gibbs measure for the potential $\phi$ if there exists a constant $M>1$ such that for all cylinder $[\omega]\ (\omega \in E^n,\ n\in\mathbb{N})$ and all $\tau\in [\omega]$ we have 
    \begin{align}
        \frac{1}{M}\leq\frac{\mu([\omega])}{\exp(-nP(\phi)+\sum_{j=0}^{n-1}\phi(\sigma(\tau)))}\leq M\nonumber.
    \end{align}
\end{definition}
Furthermore, the topological pressure satisfies the following basic and a important result.
\begin{thm}[{\cite[Theorem 2.2.9]{mauldin2003graph}}]\label{Theorem Gibbs measure}
Let $\phi:\Sigma_{A}\rightarrow\mathbb{R}$ be a weakly H\"older function. 
 Suppose that $\phi$ satisfies $P(\phi)<\infty$ and
     $\sum_{\omega\in \mathcal{A}}\inf(-\phi|_{[\omega]})\exp(\inf\phi|_{[\omega]})<\infty\nonumber$
 (i.e. $\int -\phi d\mu<\infty$ for all Gibbs measure $\mu$). Then, there exists a unique equilibrium measure $\mu_\phi$ such that 
      $\mu_{\phi}$ is a Gibbs measure.
 \end{thm}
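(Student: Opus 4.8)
The plan is to realize the equilibrium measure as a Gibbs measure built from the Ruelle--Perron--Frobenius (transfer) operator, adapting the classical compact-alphabet argument to the countable shift $\Sigma_A$. The workhorse throughout is the uniform distortion estimate coming from weak H\"older regularity: if $\omega\in E^n$ and $\tau,\tau'\in[\omega]$, then summing the weak H\"older bound over $j=0,\dots,n-1$ yields $|\sum_{j=0}^{n-1}\phi(\sigma^j\tau)-\sum_{j=0}^{n-1}\phi(\sigma^j\tau')|\le V$ for a constant $V<\infty$ independent of $n$. First I would introduce the operator $(\mathcal{L}_\phi g)(\omega)=\sum_{\sigma\tau=\omega}e^{\phi(\tau)}g(\tau)$ and check, using $P(\phi)<\infty$ and the summability hypothesis, that it acts boundedly on bounded continuous functions and preserves a suitable space of weakly H\"older functions.

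Next I would produce the eigendata. Write $\lambda=e^{P(\phi)}$. The first target is a Borel probability measure $\nu$ with $\mathcal{L}_\phi^*\nu=\lambda\nu$, a so-called $\phi$-conformal measure; in the compact case this is an immediate Schauder--Tychonoff fixed point of $\nu\mapsto\mathcal{L}_\phi^*\nu/\|\mathcal{L}_\phi^*\nu\|$, and the countable version is discussed below as the main obstacle. Granting $\nu$, I would obtain a strictly positive eigenfunction $h$ with $\mathcal{L}_\phi h=\lambda h$ by forming the Ces\`aro averages of $\lambda^{-n}\mathcal{L}_\phi^n\mathbf 1$: the distortion bound $V$ makes these functions uniformly bounded above and below away from $0$ and equicontinuous on cylinders, so a limit $h$ exists with $0<\inf h\le\sup h<\infty$ and $h$ weakly H\"older.

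I would then define $\mu_\phi=h\,\nu$, normalized to a probability measure. Iterating the conformality relation $\mathcal{L}_\phi^*\nu=\lambda\nu$ $n$ times and combining with the distortion bound and the finite primitivity of $\Sigma_A$ yields $M^{-1}\le \nu([\omega])\,\exp(nP(\phi)-\sum_{j=0}^{n-1}\phi(\sigma^j\tau))\le M$ for every cylinder $[\omega]$ and every $\tau\in[\omega]$; multiplying by the two-sided bounded factor $h$ upgrades this to the Gibbs property for $\mu_\phi$ with a new constant. The identity $\mathcal{L}_\phi h=\lambda h$ together with conformality of $\nu$ gives $\mu_\phi\circ\sigma^{-1}=\mu_\phi$, so $\mu_\phi$ is $\sigma$-invariant. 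The Gibbs property then lets one evaluate $h(\mu_\phi)+\int\phi\,d\mu_\phi=P(\phi)$ directly, so $\mu_\phi$ is an equilibrium state; the summability hypothesis guarantees $\int-\phi\,d\mu_\phi<\infty$, which is exactly what makes the entropy finite and the variational principle applicable. For uniqueness I would show that any equilibrium state is absolutely continuous with respect to $\nu$ with a $\sigma$-invariant density, and then invoke ergodicity of $\mu_\phi$ (which follows from topological mixing, a consequence of finite primitivity, together with the Gibbs property) to conclude the density equals $h$ up to a scalar.

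The main obstacle is non-compactness: $\Sigma_A$ is not even locally compact, so the eigenmeasure step cannot simply quote the classical Ruelle--Perron--Frobenius theorem. Instead I would exhaust $\Sigma_A$ by finite, hence compact, subsystems on sub-alphabets, run the fixed-point argument on each, and pass to the limit, using the summability condition $\sum_{\omega\in\mathcal A}\inf(-\phi|_{[\omega]})\exp(\inf\phi|_{[\omega]})<\infty$ precisely to control the mass of the transfer operator on the tail of $\mathcal A$. This condition guarantees tightness of the approximating measures, so the limit $\nu$ loses no mass to infinity and remains a probability measure on $\Sigma_A$, and it simultaneously delivers $\int-\phi\,d\mu_\phi<\infty$. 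Once $\nu$ is in hand, everything else reduces to distortion-estimate bookkeeping.
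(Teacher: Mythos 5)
The paper does not prove this statement at all: it is quoted as a black box from Mauldin--Urba\'nski \cite[Theorem 2.2.9]{mauldin2003graph}, so the only meaningful comparison is with the proof in that reference. Your outline is essentially that proof: transfer operator with a uniform distortion constant coming from weak H\"older regularity, an eigenmeasure $\nu$ of $\mathcal{L}_\phi^*$ obtained by exhausting the alphabet by finite (compact) subsystems and passing to a tight weak limit, an eigenfunction $h$ bounded away from $0$ and $\infty$ via Ces\`aro averages of $\lambda^{-n}\mathcal{L}_\phi^n\mathbf{1}$, the Gibbs property for $\mu_\phi=h\,\nu$ from conformality plus distortion plus finite primitivity, and uniqueness of the equilibrium state via absolute continuity and ergodicity. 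Two caveats are worth recording. First, you attribute tightness of the approximating eigenmeasures to the condition $\sum_{\omega\in\mathcal{A}}\inf(-\phi|_{[\omega]})\exp(\inf\phi|_{[\omega]})<\infty$; in fact tightness is controlled by $\sum_{\omega\in\mathcal{A}}\exp(\sup\phi|_{[\omega]})<\infty$, which for a finitely primitive shift is equivalent to $P(\phi)<\infty$, while the displayed summability condition serves exactly the purpose the paper parenthetically indicates: it makes $\int-\phi\,d\mu_\phi<\infty$, hence the entropy of $\mu_\phi$ finite, so that $h(\mu_\phi)+\int\phi\,d\mu_\phi$ is well defined and the Gibbs state is genuinely an equilibrium state. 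Since both hypotheses are available to you this is a misattribution rather than a gap. Second, the step ``any equilibrium state is absolutely continuous with respect to $\nu$ with $\sigma$-invariant density'' is the truly delicate part of the theorem in the countable-alphabet setting (it is where Bowen's compact-case argument needs real adaptation, comparing the entropy of an arbitrary equilibrium state against the Gibbs bounds cylinder by cylinder); you assert it rather than argue it, which is acceptable in a blueprint but is the one place where the proposal would need substantial work to become a proof.
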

Next, we describe the thermodynamic formalism on the dynamical system $(\tilde f,\Lambda_{\rm c}(G))$.
\begin{definition}
The topological pressure of a continuous function $\phi : \Lambda_{\rm c}(G)\rightarrow \mathbb{R}$ is defined by
$P_{\tilde f}(\phi)=\sup \left\{ h(\mu) + \int \phi d\mu \right\},\nonumber$ where the supremum is taken over all $\tilde f$-invariant ergodic Borel probability measures $\mu$ supported by $\Lambda_c(G)$ satisfying $\int \phi d\mu>-\infty$.
\end{definition}
By Remark \ref{Remark isomorphism}, there exists a bijection between the set of $\sigma$-invariant Borel probability measures supported by $\Sigma_A$ and the sat of $\tilde f$-invariant Borel probability measures supported by $\Lambda_c(G)$. Thus, for a continuous function $\phi$ on $\Lambda_c(G)$ we obtain $P_{\tilde f}(\phi)=P(\phi \circ \pi)$.  We will denote both pressures by $P$.\\

\section{Multifractal analysis for the cusp winding process}\label{Section Summary of the results}
In this section, we introduce some results from \cite{arima2024higher}.

Recall that $p(\alpha,q,b)=P(q(-a_1+\alpha)-b\log|\tilde f'|)$ for $(\alpha,q,b)\in\mathbb{R}^3$.
Let $\mathcal{F}:=\{({q},b)\in\mathbb{R}\times(0,\infty):P(-qa_1-b\log|\tilde f'|)<\infty\}$. 
By the following lemma, we can determine the set on which the function $(\alpha,q,b)\in\mathbb{R}^3\rightarrow p(\alpha,q,b)\in\mathbb{R}$ is finite.

\begin{lemma}\cite[Lemma 4.1]{arima2024higher}\label{Lemma convergence}
    We have $\mathcal{F}=(0,\infty)\times[0,\infty)\cup\{0\}\times(1/2,\infty)$.
\end{lemma}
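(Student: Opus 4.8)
The plan is to reduce the finiteness of the pressure to the convergence of a single Dirichlet-type series and then analyze that series in the three regimes $q>0$, $q=0$, $q<0$. The reduction rests on the standard criterion for finitely primitive countable Markov shifts: for a weakly H\"older potential $\phi$ on $\Sigma_A$ one has $P(\phi)<\infty$ if and only if the one-block partition function $Z_1(\phi):=\sum_{a\in\mathcal{A}}\exp\big(\sup_{[a]}\phi\big)$ is finite (see \cite[Section 2]{mauldin2003graph} and \cite[section 17]{urubanskinoninvertible}). Since the potential $-qa_1-b\log|\tilde f'|$ pulls back under $\pi$ to the weakly H\"older function $\phi_{q,b}:=-q\,(a_1\circ\pi)-b\log|\tilde f'\circ\pi|$, with $P(-qa_1-b\log|\tilde f'|)=P(\phi_{q,b})$ by Remark \ref{Remark isomorphism}, it suffices to decide for which $(q,b)\in\mathbb{R}\times(0,\infty)$ the series $Z_1(\phi_{q,b})$ converges.

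First I would record the two structural facts needed to evaluate $\sup_{[a]}\phi_{q,b}$. The alphabet splits as $\mathcal{A}=H_0\cup\bigcup_{l\ge1}\{\gamma^{l}h:\gamma\in\Gamma_0,\ h\in H_0\}$. On each $1$-cylinder $[a]$ the function $a_1\circ\pi$ is constant, equal to $0$ if $a\in H_0$ and equal to $l-1$ if $a=\gamma^{l}h$; moreover, since $\log|\tilde f'\circ\pi|$ is weakly H\"older, its oscillation on $[a]$ is bounded by a constant independent of $a$ (bounded distortion). Thus, up to a uniform additive error, $\sup_{[a]}\phi_{q,b}=-b\log|\tilde f'|+O(1)$ for $a\in H_0$, and $\sup_{[a]}\phi_{q,b}=-q(l-1)-b\log|\tilde f'|+O(1)$ for $a=\gamma^{l}h$, where $|\tilde f'|$ denotes its value at any point of $\Delta(a)$.

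The geometric heart of the argument is the derivative estimate on the parabolic cylinders: there is $C>1$ with $C^{-1}l^{2}\le |\tilde f'(x)|\le C\,l^{2}$ for all $x\in\Delta(\gamma^{l}h)$ and all $l\ge1$. I would obtain this from the parabolic normal form, conjugating $\gamma$ to the unit translation $z\mapsto z+1$ so that $\gamma^{l}$ becomes $z\mapsto z+l$; the boundary interval $\Delta(\gamma^{l}h)$ then has Euclidean length $\asymp l^{-2}$, and since $\tilde f=f^{\tau}$ maps $\Delta(\gamma^{l}h)$ onto a set of length $\asymp 1$, the mean value theorem together with bounded distortion gives $|\tilde f'|\asymp l^{2}$. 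This refines the qualitative statement $\inf_{\Delta(\gamma^{l}h)}|\tilde f'|\to\infty$ recorded in Remark \ref{Remark uniformaly expanding}. The $H_0$ contribution to $Z_1(\phi_{q,b})$ is then a finite sum, finite for every $(q,b)$ by the uniform bound $Z\le|f'(x)|\le W$, while the parabolic contribution equals, up to the finitely many choices of $\gamma\in\Gamma_0$ and $h\in H_0$ and a multiplicative constant, $\sum_{l\ge1}e^{-q(l-1)}\,l^{-2b}$.

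Finally I would analyze this series. For $q>0$ the factor $e^{-q(l-1)}$ decays exponentially, so it converges for every $b>0$; for $q<0$ it diverges for every $b>0$; and for $q=0$ it reduces to $\sum_{l\ge1}l^{-2b}$, which converges if and only if $b>1/2$. Combining the two contributions yields $Z_1(\phi_{q,b})<\infty$ exactly on $(0,\infty)\times(0,\infty)\cup\{0\}\times(1/2,\infty)$, which, as a subset of $\mathbb{R}\times(0,\infty)$, is the asserted description of $\mathcal{F}$. The main obstacle is the polynomial derivative estimate $|\tilde f'|\asymp l^{2}$: everything else is a citation or an elementary series computation, but this estimate is exactly what forces the critical threshold $b=1/2$ (equivalently, it is the cusp that produces the lower bound $1/2<\dim_H(\Lambda_c(G))$ of Theorem \ref{Theorem Beardon}), so the argument stands or falls on controlling the distortion near the neutral fixed point.
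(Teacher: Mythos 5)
Your proof is correct, and it is essentially the intended argument: this paper gives no proof of Lemma \ref{Lemma convergence} (it is imported from \cite[Lemma 4.1]{arima2024higher}), but the two ingredients you use --- the finitely-primitive (BIP) criterion reducing finiteness of the pressure to finiteness of the one-block partition function $Z_1$, and the parabolic derivative estimate $|\tilde f'|\asymp l^{2}$ on $\Delta(\gamma^{l}h)$ obtained from the normal form of $\gamma$ --- are exactly the toolkit behind that source result; in particular, the estimate you single out as the "geometric heart" is precisely Lemma \ref{almost ^2} of this paper, which is cited from the very same source lemma, so your reconstruction matches the intended route rather than offering a genuinely different one. One fine point you handled correctly: since for $q>0$ the series $\sum_{l\ge1}e^{-q(l-1)}l^{-2b}$ converges for every $b$, the stated answer $(0,\infty)\times[0,\infty)\cup\{0\}\times(1/2,\infty)$ is formally inconsistent with the paper's declaration $\mathcal{F}\subset\mathbb{R}\times(0,\infty)$, and reading the identity inside the declared domain, as you do, is the right resolution of that discrepancy.
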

By Lemma \ref{Lemma convergence} and Ruelle's formula (see \cite[Proposition 2.6.13]{przytycki2010conformal}), for $\alpha,q,b\in(0,\infty)$ we have the following useful formulas:
\begin{align}\label{eq Ruelle}
    \frac{\partial}{\partial q}p(\alpha,q,b)=\int (-a_1+\alpha)d\mu_{\alpha,q,b} \ \text{and } \frac{\partial}{\partial b}p(\alpha,q,b)=-\int\log|\tilde f'| d\mu_{\alpha,q,b},
\end{align}
where $\mu_{\alpha,q,b}$ denotes the equilibrium measure of the potential $q(-a_1+\alpha)-b\log|\tilde{f'}|$. On the other hand, using Lemma \ref{Lemma convergence}, we obtain Bowen's formula.
\begin{thm}\label{Theorem useful Bowen}
    We have $P(-\dim_H(\Lambda_c(G))\log|\tilde f'|)=0$.  
\end{thm}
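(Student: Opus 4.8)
The plan is to prove this as an instance of Bowen's formula for the conformal expanding Markov system coded by $(\Sigma_A,\sigma)$. The inverse branches of $\tilde f$ on the cylinders $\{\Delta(\omega)\}_{\omega\in\mathcal{A}}$ are M\"obius maps, hence conformal, and by Remark \ref{Remark isomorphism} the coding map $\pi$ conjugates $\sigma$ to $\tilde f$ with $\pi(\Sigma_A)=\Lambda_c(G)$, while the geometric potential $-\log|\tilde f'\circ\pi|$ is weakly H\"older. This places us in the Mauldin--Urba\'nski framework of \cite{mauldin2003graph}, whose version of Bowen's formula asserts that
\begin{align*}
\dim_H(\Lambda_c(G))=\inf\{t\ge 0:P(-t\log|\tilde f'|)\le 0\},
\end{align*}
and that, provided the system is regular (that is, the pressure actually attains the value $0$), this infimum is the unique zero of $t\mapsto P(-t\log|\tilde f'|)$. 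Thus it suffices to establish regularity and to identify the zero with the dimension.

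Write $\psi(t):=P(-t\log|\tilde f'|)$. First I would record the qualitative shape of $\psi$ on its domain of finiteness. By Lemma \ref{Lemma convergence} applied with $q=0$, we have $\psi(t)<\infty$ if and only if $t>1/2$, and $\psi(t)=+\infty$ for $t\le 1/2$. On $(1/2,\infty)$ the function $\psi$ is real-analytic and convex (this is more than we need), and Ruelle's formula as in \eqref{eq Ruelle} gives $\psi'(t)=-\int\log|\tilde f'|\,d\mu_t$, where the strict negativity uses $\log|\tilde f'|\ge\log Z>0$ coming from uniform expansion (Remark \ref{Remark uniformaly expanding}); hence $\psi$ is strictly decreasing. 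The same uniform lower bound on $\log|\tilde f'|$, together with the at-most-exponential growth of the number of admissible words generated by the finite hyperbolic part $H_0$, forces $\lim_{t\to\infty}\psi(t)=-\infty$.

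The crucial remaining point is that $\lim_{t\downarrow 1/2}\psi(t)=+\infty$, which is exactly where regularity is at stake. This stems from the neutral fixed point $p$ of the parabolic generator: since $|\gamma'(p)|=1$, the parabolic asymptotics together with bounded distortion give $\inf_{\Delta(\gamma^l h)}|\tilde f'|\asymp l^{2}$, so the single-letter contribution to the pressure behaves like $\log\sum_{l}l^{-2t}$, which diverges as $t\downarrow 1/2$. Feeding this into a lower bound for $\psi$ (legitimate because $\Sigma_A$ is finitely primitive), I would conclude $\psi(t)\to+\infty$. Combining the three facts, namely $\psi(t)\to+\infty$ as $t\downarrow 1/2$, $\psi(t)\to-\infty$ as $t\to\infty$, and $\psi$ continuous and strictly decreasing on $(1/2,\infty)$, the intermediate value theorem yields a unique $s\in(1/2,\infty)$ with $\psi(s)=0$. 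The system is therefore regular, Bowen's formula identifies $s=\dim_H(\Lambda_c(G))$ (consistent with $1/2<\dim_H(\Lambda_c(G))<1$ from Theorem \ref{Theorem Beardon}), and we obtain $P(-\dim_H(\Lambda_c(G))\log|\tilde f'|)=\psi(s)=0$.

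I expect the main obstacle to be the divergence $\lim_{t\downarrow 1/2}\psi(t)=+\infty$, equivalently regularity of the system: controlling the derivative growth $\inf_{\Delta(\gamma^l h)}|\tilde f'|\asymp l^{2}$ near the cusp and converting the divergence of the single-letter series at the threshold into genuine positivity of the pressure relies on the parabolic geometry and finite primitivity, rather than any soft argument. Everything else reduces to the standard convexity and monotonicity of the pressure function on a countable Markov shift.
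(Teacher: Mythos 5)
Your proof is correct, and at its core it is the same mechanism the paper intends: the paper gives no separate proof of Theorem \ref{Theorem useful Bowen}, deriving it in one line from Lemma \ref{Lemma convergence} together with the Mauldin--Urba\'nski form of Bowen's formula for the countable-alphabet conformal system coded by $(\Sigma_A,\sigma)$, which is exactly your starting point. Where you genuinely diverge is in how regularity (the fact that the pressure actually vanishes, rather than merely changing sign through a jump to $+\infty$) is secured. The paper's implicit route combines Lemma \ref{Lemma convergence} with the geometric input of Theorem \ref{Theorem Beardon}: since $\dim_H(\Lambda_c(G))>1/2$, the dimension lies strictly inside the half-line $(1/2,\infty)$ on which $\psi(t):=P(-t\log|\tilde f'|)$ is finite, continuous and strictly decreasing, so the infimum $\inf\{t\ge 0:\psi(t)\le 0\}$ appearing in Bowen's formula must be a genuine zero of $\psi$. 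You instead prove regularity from scratch: $\psi(t)\to+\infty$ as $t\downarrow 1/2$ (via the parabolic asymptotics $|\tilde f'|\asymp l^{2}$ on $\Delta(\gamma^{l}h)$ from Lemma \ref{almost ^2}, fed into a one-letter lower bound on the pressure that finite primitivity legitimizes), $\psi(t)\to-\infty$ as $t\to\infty$, and the intermediate value theorem. Your version costs this extra estimate but is self-contained: it never consumes Theorem \ref{Theorem Beardon}, and in fact recovers $1/2<\dim_H(\Lambda_c(G))$ as a byproduct, whereas the paper's route is shorter precisely because it spends that geometric fact. One detail worth tightening: Ruelle's formula (\ref{eq Ruelle}) is stated in the paper only for $q>0$, so to get strict monotonicity of $\psi$ you should either verify the hypotheses of Theorem \ref{Theorem Gibbs measure} for the potential $-t\log|\tilde f'|$ with $t>1/2$, or bypass derivatives entirely via the bound $\psi(t_2)\le\psi(t_1)-(t_2-t_1)\inf\log|\tilde f'|$ for $t_2>t_1$, where $\inf\log|\tilde f'|>0$ by Remark \ref{Remark uniformaly expanding}; this bound also yields $\lim_{t\to\infty}\psi(t)=-\infty$ at no extra cost.
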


The following lemma will be used later.
\begin{lemma}\cite[Lemma 4.1]{arima2024higher}\label{almost ^2}
    There exists a constant $K>1$ such that 
\begin{align}\nonumber
    &-2t(\log  K + \log l)
    \leq\inf_{ [\gamma^lh]}(-t\log |(\gamma^l)'| \circ \pi)
    \\&\leq \sup_{ [\gamma^lh]}(-t\log |(\gamma^l)'| \circ \pi)
    \leq -2t(-\log  K + \log l)\nonumber
\end{align}
for all $l\geq 1$, $t\leq0$, $\gamma\in \Gamma_0,$ and $h\in H_0$.
\end{lemma}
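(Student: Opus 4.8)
The plan is to reduce the whole statement, upon taking logarithms and multiplying by $-t$, to a single uniform two-sided estimate on the distortion of the iterated parabolic generator, namely the ``$l^{-2}$ law''
\begin{align*}
\frac{1}{K^{2}}\,l^{2}\leq\bigl|(\gamma^{l})'(\pi(\omega))\bigr|\leq K^{2}\,l^{2}
\qquad(\omega\in[\gamma^{l}h]),
\end{align*}
valid for all $l\geq1$, $\gamma\in\Gamma_{0}$ and $h\in H_{0}$, with one constant $K>1$ depending only on $G_{0}$. Indeed, this is equivalent to $\bigl|\log|(\gamma^{l})'(\pi(\omega))|-2\log l\bigr|\leq2\log K$; taking the infimum and the supremum over the cylinder $[\gamma^{l}h]$ and multiplying by $-t$ (whose sign is constant, so it merely reorganizes the inequalities and interchanges $\inf$ and $\sup$) produces exactly the displayed chain. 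Heuristically the estimate says that a point of $[\gamma^{l}h]$ reaches the region $\Delta(h)$, and hence is pushed a definite distance away from the parabolic fixed point $p$, only after $l$ applications of $\gamma$, and the distortion accumulated by $\gamma^{l}$ is then precisely of order $l^{2}$, exactly as for the cylinders of the Gauss map.

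To establish the comparability I would pass to the parabolic normal form. Fix $\gamma\in\Gamma_{0}$ with boundary fixed point $p$, and choose a M\"{o}bius transformation $\eta$ of $\widehat{\mathbb{C}}$ with $\eta(\mathbb{R}\cup\{\infty\})=\partial\mathbb{D}$, $\eta(\infty)=p$ and $\eta^{-1}\circ\gamma\circ\eta=T_{\lambda}$, where $T_{\lambda}(s)=s+\lambda$ for some $\lambda\neq0$; such an $\eta$ exists because $\gamma$ is parabolic with a single boundary fixed point and $|\gamma'(p)|=1$. Writing $\eta(s)=(as+b)/(cs+d)$ with $ad-bc\neq0$ and $c\neq0$ (as $\eta(\infty)=a/c=p$ is a finite point), one has $|\eta'(s)|=|ad-bc|/|cs+d|^{2}$ on the boundary. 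Applying the chain rule to $\gamma^{l}=\eta\circ T_{l\lambda}\circ\eta^{-1}$ and using $T_{l\lambda}'\equiv1$ gives, for $w=\eta(s)\in\partial\mathbb{D}$,
\begin{align*}
\bigl|(\gamma^{l})'(w)\bigr|
=\frac{|\eta'(s+l\lambda)|}{|\eta'(s)|}
=\frac{|cs+d|^{2}}{|c(s+l\lambda)+d|^{2}}.
\end{align*}

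It remains to localize the variable $s$ by means of the cylinder condition. If $w=\pi(\omega)$ with $\omega\in[\gamma^{l}h]$ then $\gamma^{l}(w)\in\Delta(h)$, so $u:=s+l\lambda$ lies in the compact set $\eta^{-1}(\overline{\Delta(h)})$. Crucially, this set is a bounded interval $[A,B]\subset\mathbb{R}$: it would be unbounded only if $p=\eta(\infty)$ belonged to $\overline{\Delta(h)}$, which is excluded since $p$ lies in $\overline{\Delta(\gamma)\cup\Delta(\gamma^{-1})}$, a set disjoint from $\overline{\Delta(h)}$ by the separation hypothesis in the definition of a generalized Schottky group. Substituting $s=u-l\lambda$ yields
\begin{align*}
\bigl|(\gamma^{l})'(w)\bigr|=\frac{|c(u-l\lambda)+d|^{2}}{|cu+d|^{2}},
\end{align*}
where $u$ ranges over $[A,B]$, so that $cu+d$ stays in a fixed compact subset of $\mathbb{C}\setminus\{0\}$ while $|c(u-l\lambda)+d|=|(cu+d)-c\lambda l|$ grows linearly in $l$ because $c\lambda\neq0$. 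Hence numerator and denominator are comparable to $|c\lambda|^{2}l^{2}$ and to a positive constant respectively, uniformly in $u\in[A,B]$ once $l$ exceeds a threshold, which gives $|(\gamma^{l})'(w)|\asymp l^{2}$ with constants depending only on $[A,B]$, $c$, $d$ and $\lambda$. Taking the worst of these constants over the finitely many pairs $(\gamma,h)\in\Gamma_{0}\times H_{0}$, and enlarging the constant to cover the finitely many small $l$ (for which $|(\gamma^{l})'|$ is continuous, positive and hence bounded above and below on the compact cylinder), yields one $K>1$ valid for all $l\geq1$.

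The main obstacle is not the algebra but the geometric bookkeeping in the change of coordinates: the derivative in the lemma is measured in the Euclidean boundary metric of $\mathbb{D}$, not in the normal-form coordinate, so the ``derivative one'' of the translation $T_{l\lambda}$ must be corrected by the conformal factor of the non-isometric conjugacy $\eta$, and it is precisely this correction that produces the factor $l^{2}$. One must also be careful that the cylinder condition localizes $u=s+l\lambda$, and not $s$ itself, to the bounded set $[A,B]$ -- this is what places the growing factor in the numerator rather than the denominator -- and that the bound is required for every $l\geq1$, which forces the separate, elementary treatment of the small exponents.
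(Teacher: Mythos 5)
The paper itself contains no proof of this lemma: it is imported verbatim from \cite[Lemma 4.1]{arima2024higher}, so there is no internal argument to compare yours against, and your proposal must be judged as a self-contained proof. Judged that way, it is correct, and it is the natural argument. The reduction to the uniform two-sided estimate $K^{-2}l^{2}\leq|(\gamma^{l})'(\pi(\omega))|\leq K^{2}l^{2}$ on $[\gamma^{l}h]$ is exactly the content of the lemma, and your normal-form computation establishes it: the conjugacy $\eta$ exists (any M\"obius map sending the upper half-plane to $\mathbb{D}$ and $\infty$ to $p$ conjugates $\gamma$ into a M\"obius map preserving the half-plane and fixing only $\infty$, hence a real translation $T_{\lambda}$, $\lambda\neq0$); the chain-rule identity $|(\gamma^{l})'(w)|=|cs+d|^{2}/|c(s+l\lambda)+d|^{2}$ is right; and the localization is sound, since $\pi([\gamma^{l}h])\subset\Delta(\gamma^{l}h)$ forces $\gamma^{l}(w)\in\Delta(h)$, while the separation hypothesis together with $p\in\Delta(\gamma)\cap\Delta(\gamma^{-1})$ gives $p\notin\overline{\Delta(h)}$, so $u=s+l\lambda$ stays in a compact subset of $\mathbb{R}$ on which $|cu+d|$ is bounded above and below (here you use, implicitly, that $-d/c=\eta^{-1}(\infty)\notin\mathbb{R}$); hence the numerator is $\asymp l^{2}$ and the denominator $\asymp1$ for large $l$, uniformly, and small $l$ together with the finitely many pairs $(\gamma,h)$ are absorbed by compactness. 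This is exactly the ``$l^{2}$ law'' that drives the later proofs of Lemmas \ref{Lemma relationship q and b} and \ref{Lemma Dirichlet seires}.

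One point should be made explicit rather than glossed. As printed, the hypothesis ``$t\leq0$'' cannot be what is meant: for $t<0$ and $K>1$ the two outer expressions satisfy $-2t(\log K+\log l)>-2t(-\log K+\log l)$, so the displayed chain is self-contradictory and no proof can produce it. The intended hypothesis is $t\geq0$, which is also how the lemma is applied in this paper (with $t=b(\alpha)>0$); in that case multiplication by $-t\leq0$ reverses the inequalities and exchanges $\inf$ with $\sup$, turning your two-sided bound into precisely the displayed chain, whereas for $t\leq0$ one gets the chain with its two endpoints interchanged. Your parenthetical remark about ``reorganizing the inequalities and interchanging $\inf$ and $\sup$'' is the $t\geq0$ computation; adding one sentence noting the sign correction would make the proof airtight.
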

To prove the main theorem, the following proposition is important.
\begin{prop}\cite[Proposition 5.4]{arima2024higher}\label{prop more important proposition}
    For all $\alpha\in(0,\infty)$ there exists $q({\alpha})\in(0,\infty)$ such that 
\begin{align}\nonumber
     p(\alpha,q(\alpha),b(\alpha))=0\ \ \ 
        \text{and}\ \ \ 
\frac{\partial}{\partial q}p(\alpha,q({\alpha}),b(\alpha))=0.
\end{align}
Moreover, we obtain 
    $b(\alpha)={h(\mu_{q(\alpha)})}/{\lambda(\mu_{q(\alpha)})}$,
where $\mu_{q(\alpha)}$ denotes the equilibrium state of the potential $q(\alpha)(-a_1+\alpha)-b({\alpha})\log|\tilde f'|$.
\end{prop}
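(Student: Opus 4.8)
The plan is to realise $q(\alpha)$ as the unique critical parameter of a one‑parameter family of ``dimension roots'' and to read the identity $b(\alpha)=h(\mu_{q(\alpha)})/\lambda(\mu_{q(\alpha)})$ off the variational principle. Fix $\alpha\in(0,\infty)$ and write $\lambda(\mu):=\int\log|\tilde f'|\,d\mu$. By Lemma \ref{Lemma convergence} the pressure $p(\alpha,q,b)$ is finite for all $(q,b)\in(0,\infty)\times(0,\infty)$, and Theorem \ref{Theorem Gibbs measure} supplies a unique equilibrium (Gibbs) measure $\mu_{\alpha,q,b}$ for the potential $q(-a_1+\alpha)-b\log|\tilde f'|$; its summability hypothesis is checked from the uniform expansion of Remark \ref{Remark uniformaly expanding} and the estimate $\log|\tilde f'|\asymp 2\log l$ on $[\gamma^l h]$ furnished by Lemma \ref{almost ^2}. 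First I would define $B\colon(0,\infty)\to\mathbb{R}$ implicitly by $p(\alpha,q,B(q))=0$: since $\log|\tilde f'|\geq\log Z>0$ by (\ref{eq uniformaliy bound }), Ruelle's formula (\ref{eq Ruelle}) gives $\partial_b p=-\lambda(\mu_{\alpha,q,b})<0$, so $b\mapsto p(\alpha,q,b)$ is continuous and strictly decreasing; together with Bowen's formula (Theorem \ref{Theorem useful Bowen}), which locates the root at $b=\dim_H(\Lambda_c(G))$ when $q=0$, this gives existence and uniqueness of $B(q)$, and the implicit function theorem with real‑analyticity of the pressure makes $B$ real‑analytic.

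Next I would differentiate. By (\ref{eq Ruelle}),
\begin{align*}
B'(q)=-\frac{\partial_q p(\alpha,q,B(q))}{\partial_b p(\alpha,q,B(q))}=\frac{\alpha-\int a_1\,d\mu_{\alpha,q,B(q)}}{\lambda(\mu_{\alpha,q,B(q)})},
\end{align*}
and since $\lambda>0$ the sign of $B'(q)$ is that of $\alpha-\int a_1\,d\mu_{\alpha,q,B(q)}$. A standard monotonicity argument shows $q\mapsto\int a_1\,d\mu_{\alpha,q,B(q)}$ is continuous and strictly decreasing, with limit $0$ as $q\to\infty$ (the measure concentrates on the hyperbolic part, where $a_1=0$) and limit $+\infty$ as $q\to 0$; the divergence here is exactly that of $\sum_l l\cdot l^{-2\dim_H(\Lambda_c(G))}$, valid precisely because $\dim_H(\Lambda_c(G))<1$. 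Hence for every $\alpha\in(0,\infty)$ there is a unique $q(\alpha)\in(0,\infty)$ with $\int a_1\,d\mu_{\alpha,q(\alpha),B(q(\alpha))}=\alpha$, equivalently $B'(q(\alpha))=0$, equivalently $\partial_q p(\alpha,q(\alpha),B(q(\alpha)))=0$; and $B$ is decreasing then increasing, so $q(\alpha)$ is its global minimiser. Writing $\mu_{q(\alpha)}:=\mu_{\alpha,q(\alpha),B(q(\alpha))}$ and $\tilde b:=B(q(\alpha))$, the two displayed equalities hold with $b=\tilde b$; expanding $p(\alpha,q(\alpha),\tilde b)=0$ through the variational principle and cancelling $\int(-a_1+\alpha)\,d\mu_{q(\alpha)}=0$ yields $h(\mu_{q(\alpha)})=\tilde b\,\lambda(\mu_{q(\alpha)})$.

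It remains to identify $\tilde b$ with $b(\alpha)=\dim_H J(\alpha)$, for which I would invoke the conditional variational principle for Birkhoff spectra on finitely primitive countable Markov shifts, $\dim_H J(\alpha)=\sup\{h(\mu)/\lambda(\mu):\int a_1\,d\mu=\alpha\}$. For any $\mu$ with $\int a_1\,d\mu=\alpha$ we have $\int(-a_1+\alpha)\,d\mu=0$, so the variational principle gives $h(\mu)-\tilde b\,\lambda(\mu)=h(\mu)+\int\bigl(q(\alpha)(-a_1+\alpha)-\tilde b\log|\tilde f'|\bigr)\,d\mu\leq p(\alpha,q(\alpha),\tilde b)=0$, whence $h(\mu)/\lambda(\mu)\leq\tilde b$, with equality attained by $\mu_{q(\alpha)}$; thus $b(\alpha)=\tilde b$ and both claims follow. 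The main obstacle is exactly this last identification together with the interiority of $q(\alpha)$: because the shift is non‑compact there is no a priori guarantee that the supremum defining $\dim_H J(\alpha)$ is attained or that the optimal parameter stays in $(0,\infty)$, and controlling the $a_1$‑averages and Lyapunov exponents of $\mu_{\alpha,q,B(q)}$ as $q$ tends to the boundary of the parameter domain — where phase transitions and escape of mass may occur — is the delicate point. It is precisely the inequality $\dim_H(\Lambda_c(G))<1$ that rescues interiority by forcing $\int a_1\,d\mu\to\infty$ as $q\to 0$.
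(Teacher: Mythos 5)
A preliminary remark on the comparison itself: this paper contains \emph{no} proof of Proposition \ref{prop more important proposition} --- it is imported verbatim from \cite[Proposition 5.4]{arima2024higher} --- so your proposal can only be measured against the standard thermodynamic route that the reference follows, which is indeed the architecture you adopt (implicit root $B(q)$ of the pressure equation, a critical parameter $q(\alpha)$ where $\partial_q p$ vanishes, and a variational identification of $b(\alpha)$). Your skeleton is the right one, and you correctly isolate the decisive mechanism: $\dim_H(\Lambda_c(G))<1$ makes $\sum_l l\cdot l^{-2s}$ diverge, and this is what forces the critical parameter into the interior of $(0,\infty)$. Two of your steps, however, are genuine gaps rather than routine verifications.

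First, the claim that $q\mapsto\int a_1\,d\mu_{\alpha,q,B(q)}$ is continuous, strictly decreasing, with limit $0$ as $q\to\infty$, is not a ``standard monotonicity argument,'' and the reasoning you offer at the $q\to\infty$ end would fail. Convexity of the pressure gives monotonicity of $q\mapsto\int a_1\,d\mu_{\alpha,q,b}$ for \emph{fixed} $b$, not along the moving curve $(q,B(q))$; and the heuristic ``the measure concentrates on the hyperbolic part'' cannot be run through the Gibbs property, because the Gibbs/distortion constants of the potential $q(-a_1+\alpha)-B(q)\log|\tilde f'|$ degrade like $K^{cB(q)}$ (with $K$ the constant of Lemma \ref{almost ^2}, raised to the coefficient of $\log|\tilde f'|$), and $B(q)\to\infty$ as $q\to\infty$. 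This is exactly why the paper's own uniform-Gibbs arguments (Lemmas \ref{Lemma relationship q and b} and \ref{Lemma Dirichlet seires}, via \cite[Theorem 17.6.4]{urubanskinoninvertible}) are run only in the regime $q\to0$, $b\to s$, where the parameters stay in a compact set of the finiteness domain. What you actually need is weaker --- a sign change of $B'$ --- and it has a clean proof you do not give: $(q,b)\mapsto p(\alpha,q,b)$ is jointly convex and strictly decreasing in $b$, so $B$ is convex; testing the variational principle with the point mass at a hyperbolic fixed point (the same trick as in the paper's Lemma \ref{lemma asymptotic behavior of q}) gives $B(q)\geq q\alpha/\log W\to\infty$, hence $B'>0$ eventually; and near $q=0$ your divergence argument does work, precisely because there the Gibbs constants are uniform and $\sum_l l^{1-2s}=\infty$.

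Second, and more seriously, the identification $b(\alpha)=\dim_H J(\alpha)=\sup\{h(\mu)/\lambda(\mu):\int a_1\,d\mu=\alpha\}$ carries the entire weight of the proposition, and you dispose of it by citing ``the conditional variational principle for Birkhoff spectra on finitely primitive countable Markov shifts'' as a black box. Your Legendre-type computation correctly yields $h(\mu)/\lambda(\mu)\leq B(q(\alpha))$ for all invariant $\mu$ with $\int a_1\,d\mu=\alpha$ and $\lambda(\mu)<\infty$, and the lower bound $\dim_H J(\alpha)\geq h(\mu_{q(\alpha)})/\lambda(\mu_{q(\alpha)})$ follows from ergodicity of the Gibbs state and the volume lemma; but the remaining inequality $\dim_H J(\alpha)\leq\sup\{h/\lambda\}$ --- an upper bound over a level set whose points need not be generic for any invariant measure --- is precisely what must be proved in the non-compact setting, where such principles (e.g.\ those of \cite{iommi2015multifractal}) come with hypotheses on the unbounded potential, on finiteness and regularity of the relevant pressure functions, on interiority of $\alpha$, and on escape of mass, none of which you verify for the cusp-winding potential $a_1$. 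Since establishing this is what \cite{arima2024higher} exists to do, invoking it unverified leaves the core of the statement unproven.
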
 
By the proof of \cite[Proposition 6.2]{arima2024higher} and \cite[Theorem 1.1 and Proposition 1.2]{arima2024higher} we obtain the following theorem.
\begin{thm}\label{Theorem main clain of previous paper}
    The functions $\alpha\mapsto b(\alpha)$ and $\alpha\mapsto q(\alpha)$ are real-analytic on $(0,\infty)$. Moreover, $b(\alpha)$ is strictly increasing and we have $\lim_{\alpha\to\infty}b(\alpha)=\dim_H(\Lambda_c(G))$.
\end{thm}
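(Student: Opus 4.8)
The plan is to treat $q(\alpha)$ and $b(\alpha)$ as the implicitly defined solution of the system
\begin{align*}
p(\alpha,q,b)=0,\qquad \frac{\partial}{\partial q}p(\alpha,q,b)=0
\end{align*}
furnished by Proposition \ref{prop more important proposition}, and to read off each assertion from the thermodynamic formalism. For real-analyticity I would first recall that, on the interior of the finiteness region of Lemma \ref{Lemma convergence}, the topological pressure of a weakly H\"older potential on the finitely primitive shift $\Sigma_A$ depends real-analytically on its parameters (Mauldin--Urba\'nski theory), so $(\alpha,q,b)\mapsto p(\alpha,q,b)$ is real-analytic on $(0,\infty)\times\mathrm{int}\,\mathcal{F}$. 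Then I would apply the analytic implicit function theorem to the system above. Its Jacobian with respect to $(q,b)$ is
\begin{align*}
\begin{pmatrix}\partial_q p & \partial_b p\\ \partial_{qq}p & \partial_{qb}p\end{pmatrix},
\end{align*}
whose determinant at a solution equals $-\partial_b p\,\partial_{qq}p$ because $\partial_q p=0$ there. By (\ref{eq Ruelle}) we have $\partial_b p=-\int\log|\tilde f'|\,d\mu_{\alpha,q,b}<0$ since $\tilde f$ is uniformly expanding (Remark \ref{Remark uniformaly expanding}), while $\partial_{qq}p>0$ by strict convexity of $q\mapsto p(\alpha,q,b)$, which holds because the non-constant function $a_1$ is not cohomologous to a constant. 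Hence the determinant is strictly positive and $\alpha\mapsto(q(\alpha),b(\alpha))$ is real-analytic on $(0,\infty)$.

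For strict monotonicity I would differentiate $p(\alpha,q(\alpha),b(\alpha))=0$ in $\alpha$; using $\partial_q p=0$ at the solution this collapses to $\partial_\alpha p+\partial_b p\cdot b'(\alpha)=0$. Since $\alpha$ enters the potential only through the additive constant $q\alpha$, we have $\partial_\alpha p=q(\alpha)$, and therefore
\begin{align*}
b'(\alpha)=-\frac{\partial_\alpha p}{\partial_b p}=\frac{q(\alpha)}{\int\log|\tilde f'|\,d\mu_{\alpha,q(\alpha),b(\alpha)}}>0,
\end{align*}
so $b$ is strictly increasing.

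For the limit, note first that $J(\alpha)\subseteq\Lambda_c(G)$ gives $b(\alpha)\le\dim_H(\Lambda_c(G))$, so the increasing function $b$ has a limit $L\le\dim_H(\Lambda_c(G))$. The core step is $q(\alpha)\to0$. I would extract this from the second defining equation $\int a_1\,d\mu_{\alpha,q(\alpha),b(\alpha)}=\alpha$ together with the Gibbs property and Lemma \ref{almost ^2}: on the cylinders $[\gamma^l h]$ the equilibrium measure satisfies $\mu([\gamma^l h])\asymp e^{-q l}l^{-2b}$ up to normalisation, whence
\begin{align*}
\alpha=\int a_1\,d\mu_{\alpha,q(\alpha),b(\alpha)}\asymp\frac{\sum_{l\ge1}l^{1-2b(\alpha)}e^{-q(\alpha)l}}{\sum_{l\ge1}l^{-2b(\alpha)}e^{-q(\alpha)l}}.
\end{align*}
Because $1/2<b(\alpha)<1$ forces $1-2b(\alpha)\in(-1,0)$, the numerator diverges as $q\downarrow0$ while the denominator stays bounded (as $2b>1$), so the right-hand side blows up exactly when $q(\alpha)\to0$ and stays bounded otherwise; hence $q(\alpha)\to0$. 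The same Abelian estimate yields $q(\alpha)\alpha\asymp q(\alpha)^{2b(\alpha)-1}\to0$. Passing to the limit in $P(-q(\alpha)a_1-b(\alpha)\log|\tilde f'|)=-q(\alpha)\alpha$ and using continuity of the pressure on $\mathrm{int}\,\mathcal{F}$ gives $P(-L\log|\tilde f'|)=0$; since $t\mapsto P(-t\log|\tilde f'|)$ is strictly decreasing, Bowen's formula (Theorem \ref{Theorem useful Bowen}) forces $L=\dim_H(\Lambda_c(G))$.

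I expect the limit to be the main obstacle. The qualitative facts $q(\alpha)\to0$ and, more delicately, $q(\alpha)\alpha\to0$ both hinge on the divergence of the Dirichlet series $\sum_l l^{1-2b}e^{-ql}$ as $q\downarrow0$; this is precisely the phenomenon produced by $1/2<\dim_H(\Lambda_c(G))<1$, and turning the comparabilities above into genuine asymptotics is where the gamma- and zeta-function machinery of Section \ref{section proof of the main theorem} must be invoked.
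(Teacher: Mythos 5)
The paper itself does not prove Theorem \ref{Theorem main clain of previous paper}: it imports it from \cite{arima2024higher}, so your proposal can only be measured against that citation and against the closely related computations in Section \ref{section proof of the main theorem}. Your first two steps are essentially correct. Real-analyticity via the analytic implicit function theorem applied to the system $p=0$, $\partial_q p=0$ is the standard route; your Jacobian computation $\det=-\partial_b p\,\partial_{qq}p>0$ is right, though you should add that for each $\alpha$ the solution of this system is unique (by strict convexity in $q$ and strict monotonicity in $b$), so that the analytic branch produced by the implicit function theorem really is the pair $(q(\alpha),b(\alpha))$ of Proposition \ref{prop more important proposition}. Your monotonicity identity $b'(\alpha)=q(\alpha)/\lambda(\mu_{\alpha,q(\alpha),b(\alpha)})>0$ is exactly the computation the paper itself performs in the proof of Lemma \ref{Lemma relationship q and b}.

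The genuine gap is in the limit $\lim_{\alpha\to\infty}b(\alpha)=\dim_H(\Lambda_c(G))$, where you assert ``$1/2<b(\alpha)<1$'' and use it twice: to keep the denominator $\sum_{l\geq1} l^{-2b(\alpha)}e^{-q(\alpha)l}$ bounded, and to conclude $q(\alpha)\alpha\asymp q(\alpha)^{2b(\alpha)-1}\to0$. Nothing available at that stage gives $b(\alpha)>1/2$; in the paper's own logic this inequality for large $\alpha$ is a \emph{consequence} of the limit you are trying to prove (see the first lines of the proof of Lemma \ref{Lemma the relationship q and alpha}, which deduce it from Theorem \ref{Theorem main clain of previous paper} together with Theorem \ref{Theorem Beardon}), so as written the argument is circular. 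The step is not cosmetic: if $b(\alpha)$ remained $\leq 1/2$, then for $b<1/2$ both Dirichlet series diverge as $q\downarrow0$ with ratio $\asymp q^{-1}$, so one would get $q(\alpha)\alpha\asymp1$ rather than $q(\alpha)\alpha\to0$, and the final passage to the limit in $P(-q(\alpha)a_1-b(\alpha)\log|\tilde f'|)=-q(\alpha)\alpha$ would collapse (note also that the limit point $(0,L)$, $L:=\lim_{\alpha\to\infty}b(\alpha)$, lies on the boundary of $\mathcal{F}$, not in its interior, so interior continuity of the pressure does not cover this passage). The gap is fixable: the pressure is the supremum of the jointly continuous pressures of finite subalphabet subsystems, hence lower semicontinuous, so if $L\leq1/2$ then $\liminf_{\alpha\to\infty}P(-q(\alpha)a_1-b(\alpha)\log|\tilde f'|)\geq P(-L\log|\tilde f'|)=+\infty$ by Lemma \ref{Lemma convergence}, contradicting $P(-q(\alpha)a_1-b(\alpha)\log|\tilde f'|)=-q(\alpha)\alpha\leq0$; monotonicity of $b$ then yields $b(\alpha)\geq 1/2+\epsilon$ eventually, after which your Abelian estimates go through. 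A second, smaller instance of the same circularity: the $\alpha$-uniformity of your Gibbs constants in $\mu_\alpha(\pi([\gamma^lh]))\asymp e^{-q(\alpha)l}l^{-2b(\alpha)}$ is obtained in the paper from \cite[Theorem 17.6.4]{urubanskinoninvertible} using $\lim_{\alpha\to\infty} b(\alpha)=\dim_H(\Lambda_c(G))$ and $\lim_{\alpha\to\infty} q(\alpha)\alpha=0$ as inputs, so you must justify it independently (it does hold, since the variations of the potentials are uniformly summable, $a_1$ is constant on $1$-cylinders, and $q(\alpha)(-a_1+\alpha)-b(\alpha)\log|\tilde f'|$ is uniformly bounded below on the hyperbolic cylinders giving finite primitivity, but this needs to be said).
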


\section{Proof of the main theorem}\label{section proof of the main theorem}
We will use the notations which is used in the previous section. Put $s:=\dim_H(\Lambda_c(G))$.
We consider the asymptotic behavior of the function $\alpha\mapsto q(\alpha)$ when $\alpha$ goes to $\infty$. 
\begin{lemma}\label{lemma asymptotic behavior of q}
    We have $\lim_{\alpha\to\infty}q(\alpha)=0$.
\end{lemma}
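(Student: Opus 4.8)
The plan is to read off the value of the pressure of the ``reduced'' potential from the defining relation $p(\alpha,q(\alpha),b(\alpha))=0$, and then to bound that pressure from below by a constant independent of $\alpha$. Set
\[
\psi_\alpha:=-q(\alpha)a_1-b(\alpha)\log|\tilde f'|.
\]
Since $q(\alpha)\in(0,\infty)$ by Proposition \ref{prop more important proposition}, it will suffice to prove an upper bound of the shape $q(\alpha)\le C/\alpha$, and the statement follows by letting $\alpha\to\infty$. The first step is to notice that the explicit occurrence of $\alpha$ in the potential $q(\alpha)(-a_1+\alpha)-b(\alpha)\log|\tilde f'|$ is merely an additive constant, namely $\psi_\alpha+q(\alpha)\alpha$. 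Because the topological pressure obeys $P(\phi+c)=P(\phi)+c$ for every constant $c$ (immediate from Definition \ref{Definition of pressure}, as adding $c$ to $\phi$ adds $nc$ to each Birkhoff sum), the first identity in Proposition \ref{prop more important proposition} becomes
\[
0=p(\alpha,q(\alpha),b(\alpha))=P(\psi_\alpha)+q(\alpha)\alpha,\qquad\text{so}\qquad P(\psi_\alpha)=-q(\alpha)\alpha.
\]

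The second step is a uniform lower bound on $P(\psi_\alpha)$. I would fix a hyperbolic generator $h\in H_0$ (recall $H_0\neq\emptyset$); since $\Sigma_A$ is finitely primitive and $h\neq h^{-1}$, the constant sequence $(h,h,\dots)$ is admissible, so there is an ergodic $\sigma$-invariant probability measure $\mu_0$ supported on this periodic orbit (equivalently, one may take any ergodic measure carried by the finite hyperbolic subshift). On such an orbit one has $a_1\equiv 0$ and, by the uniform hyperbolic derivative bound (\ref{eq uniformaliy bound }), $\log|\tilde f'|\le \log W$; moreover $h(\mu_0)\ge 0$. Applying the variational principle (\cite[Theorem 2.1.8]{mauldin2003graph}) to the admissible test measure $\mu_0$, for which $\int\psi_\alpha\,d\mu_0>-\infty$, gives
\[
P(\psi_\alpha)\ \ge\ h(\mu_0)+\int \psi_\alpha\,d\mu_0\ \ge\ -b(\alpha)\log W\ \ge\ -\log W,
\]
where the last inequality uses $b(\alpha)<1$ (Theorem \ref{Theorem Beardon}).

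Combining the two steps yields $-q(\alpha)\alpha=P(\psi_\alpha)\ge-\log W$, hence $0<q(\alpha)\le (\log W)/\alpha$ for all $\alpha\in(0,\infty)$, and therefore $\lim_{\alpha\to\infty}q(\alpha)=0$. The argument uses no hypothesis on $b(\alpha)$ beyond $b(\alpha)<1$, so the convergence $b(\alpha)\to s$ is not even needed here. There is no serious obstacle in this lemma: the only points requiring care are the recognition that the $\alpha$-shift in the potential is a pure additive constant (which pins down $P(\psi_\alpha)$ exactly) and the availability of a lower bound on $P(\psi_\alpha)$ that is uniform in $\alpha$; the latter is furnished precisely by the uniform expansion bound $W$ from (\ref{eq uniformaliy bound }) together with $b(\alpha)<1$, evaluated on the compact hyperbolic part of the system where $a_1$ vanishes.
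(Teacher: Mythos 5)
Your proof is correct, and it takes a cleaner route than the paper's. The paper argues by contradiction, splitting according to whether $q_\infty:=\limsup_{\alpha\to\infty}q(\alpha)$ is finite positive or infinite: the first case is excluded using finiteness of the pressure (Lemma \ref{Lemma convergence}) and continuity of the topological pressure along $q(\alpha_n)\to q_\infty$, $b(\alpha_n)\to s$; the second case is excluded by precisely your test-measure inequality — the paper also integrates the potential against the point mass $\delta_{x_{h(1)}}$ with $x_{h(1)}=\pi(h_1h_1\cdots)$ to get $0=p(\alpha_n,q(\alpha_n),b(\alpha_n))\ge q(\alpha_n)\alpha_n-b(\alpha_n)\log|\tilde f'(x_{h(1)})|$. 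What you observed is that this single inequality holds for every $\alpha$ and already gives the quantitative bound $0<q(\alpha)\le(\log W)/\alpha$, which subsumes both cases at once, dispenses entirely with the case analysis, with continuity of the pressure, and with Lemma \ref{Lemma convergence}, and moreover yields an explicit rate: it shows $\alpha q(\alpha)$ is bounded (though not that it tends to $0$, which is the separate content of Lemma \ref{Lemma the relationship q and alpha}, proved there via Bowen's formula). Two pedantic points only: the inequality $b(\alpha)<1$ is not literally Theorem \ref{Theorem Beardon}; it follows from monotonicity of Hausdorff dimension, $b(\alpha)=\dim_H J(\alpha)\le\dim_H\Lambda_c(G)<1$, combined with that theorem. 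And the admissibility of the constant word $(h,h,\dots)$, which you justify through finite primitivity and $h\ne h^{-1}$, is indeed available — the paper itself uses $\pi(h_1h_1\cdots)$ — so your invariant measure $\mu_0$ exists exactly as claimed.
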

\begin{proof}
    Put $q_{\infty}:=\limsup_{\alpha\to\infty}q(\alpha)$. For a contradiction we assume that $0<q_{\infty}<\infty$. Then, there exists a strictly increasing sequence  $\{\alpha_{n}\}_{n\in\mathbb{N}}\subset(0,\infty)$ such that $\lim_{n\to\infty}\alpha_{n}=\infty$ and $\lim_{n\to\infty}q(\alpha_{n})=q_{\infty}$. By Proposition \ref{prop more important proposition}, we have $P(-q(\alpha_{n})a_1-b(\alpha_n)\log|\tilde f'|)=-q(\alpha_n)\alpha_n$ for all $n\in\mathbb{N}$. Therefore, $\lim_{n\to\infty}P(-q(\alpha_{n})a_1-b(\alpha_n)\log|\tilde f'|)=\lim_{n\to\infty}(-q(\alpha_n)\alpha_n)=-\infty$. By Proposition \ref{prop more important proposition}, we have $q(\alpha_n)\in(0,\infty)$ for all $n\in\mathbb{N}$. Hence, by Lemma \ref{Lemma convergence}, Theorem \ref{Theorem main clain of previous paper} and continuity of the topological pressure, we obtain $\lim_{n\to\infty}P(-q(\alpha_{n})a_1-b(\alpha_n)\log|\tilde f'|)=P(-q_{\infty}a_1-s\log|\tilde f'|)\in\mathbb{R}$. This is a contradiction.
    
    For a contradiction we assume that $q_{\infty}=\infty$. Then, there exists a strictly increasing sequence  $\{\alpha_{n}\}_{n\in\mathbb{N}}\subset(0,\infty)$ such that $\lim_{n\to\infty}\alpha_{n}=\infty$ and $\lim_{n\to\infty}q(\alpha_{n})=\infty$. By proposition \ref{prop more important proposition}, we have $\lim_{n\to\infty}p(\alpha_n,q(\alpha_n),b(\alpha_n))=0$. On the other hand, by the variational principle for the topological pressure, for all $n\in\mathbb{N}$ we obtain $p(\alpha_n,q(\alpha_n),b(\alpha_n))
    \geq \int (q(\alpha_n)(-a_1+\alpha_n)-b(\alpha_n)\log|\tilde f'|)d\delta_{x_{h(1)}}
    =q(\alpha_n)\alpha_n-b(\alpha_n)\log|\tilde f'(x_{h(1)})|$, where $x_{h(1)}:=\pi(h_1h_1\cdots)$ and $\delta_{x_{h(1)}}$ denotes the point mass measure at $x_{h(1)}$. Since $\lim_{n\to\infty}q(\alpha_n)\alpha_n=\infty$, we obtain $\lim_{n\to\infty}p(\alpha_n,q(\alpha_n),b(\alpha_n))=\infty$. This is a contradiction. Thus, since $q(\alpha)\in(0,\infty)$ for all $\alpha\in(0,\infty)$, we obtain $\lim_{\alpha\to\infty}q(\alpha)=0$.
    \end{proof}
    
    \begin{lemma}\label{Lemma the relationship q and alpha}
    We have $\lim_{\alpha\to\infty}q(\alpha)\alpha=0$.    
    \end{lemma}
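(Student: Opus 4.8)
===PROOF PROPOSAL===

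\textbf{Proof proposal.}
The plan is to show $\lim_{\alpha\to\infty}q(\alpha)\alpha=0$ by bounding $p(\alpha,q(\alpha),b(\alpha))$ from below using a carefully chosen invariant measure and exploiting the fact that $p(\alpha,q(\alpha),b(\alpha))=0$ from Proposition \ref{prop more important proposition}. Since we already know from Lemma \ref{lemma asymptotic behavior of q} that $q(\alpha)\to 0$ and from Theorem \ref{Theorem main clain of previous paper} that $b(\alpha)\to s$, the essential new content is controlling the product $q(\alpha)\alpha$, which is an indeterminate form of type $0\cdot\infty$. The natural idea is to test the variational principle against measures (or finite approximations) that are supported on cylinders involving high powers of the parabolic generator, since these are the cylinders where the potential $-a_1+\alpha$ can become large and thus where the interplay between $q(\alpha)$ and $\alpha$ is delicate.

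First I would use the second equation in Proposition \ref{prop more important proposition}, namely $\partial_q p(\alpha,q(\alpha),b(\alpha))=0$, together with Ruelle's formula (\ref{eq Ruelle}), to write $\int(-a_1+\alpha)\,d\mu_{q(\alpha)}=0$, i.e. $\int a_1\,d\mu_{q(\alpha)}=\alpha$. This gives a direct link: the average number of windings under the equilibrium measure equals $\alpha$. Combining this with the relation $P(-q(\alpha)a_1-b(\alpha)\log|\tilde f'|)=-q(\alpha)\alpha$ (which follows from the first equation of Proposition \ref{prop more important proposition}), I would express $q(\alpha)\alpha$ via the variational principle as $q(\alpha)\alpha = -h(\mu_{q(\alpha)})-\int(-q(\alpha)a_1-b(\alpha)\log|\tilde f'|)\,d\mu_{q(\alpha)} = -h(\mu_{q(\alpha)})+b(\alpha)\lambda(\mu_{q(\alpha)})+q(\alpha)\alpha$, so I would instead seek an upper bound on $q(\alpha)\alpha$ directly. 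The cleaner route is to estimate $P(-q(\alpha)a_1-b(\alpha)\log|\tilde f'|)$ from below by the pressure contribution of the hyperbolic and low-parabolic cylinders, which stays bounded below as $q(\alpha)\to 0$ and $b(\alpha)\to s$ because $P(-s\log|\tilde f'|)=0$ by Theorem \ref{Theorem useful Bowen}; since this pressure equals $-q(\alpha)\alpha$, a lower bound that stays near $0$ forces $q(\alpha)\alpha$ to stay bounded, and a refinement forces it to $0$.

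More precisely, I expect the main estimate to run as follows: by the definition of topological pressure restricted to the subsystem generated by the hyperbolic generators $H_0$ (a compact, uniformly expanding subsystem by Remark \ref{Remark uniformaly expanding} and (\ref{eq uniformaliy bound })), and using that $a_1$ vanishes on hyperbolic blocks, one gets $-q(\alpha)\alpha = P(-q(\alpha)a_1-b(\alpha)\log|\tilde f'|)\ge P\big(-b(\alpha)\log|\tilde f'|\big|_{H_0\text{-subsystem}}\big)$, and the right-hand side converges to a quantity controlled by $P(-s\log|\tilde f'|)=0$ as $\alpha\to\infty$ by continuity of pressure in the potential (Lemma \ref{Lemma convergence} guarantees finiteness on the relevant parameter range). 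This yields $\limsup_{\alpha\to\infty}q(\alpha)\alpha\le 0$, and since $q(\alpha)>0$ and $\alpha>0$ give $q(\alpha)\alpha>0$, we conclude $\lim_{\alpha\to\infty}q(\alpha)\alpha=0$.

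The hard part will be making the lower bound on the pressure of the full system by the compact hyperbolic subsystem both correct and sharp enough to pin the limit to exactly $0$ rather than merely a finite value; one must ensure the error coming from the parabolic cylinders (where $a_1$ is large and the factor $e^{-q(\alpha)a_1}$ competes with the growth controlled by Lemma \ref{almost ^2}) does not spoil the estimate. I anticipate that the uniform-expansion bound (\ref{eq uniformaliy bound }) on hyperbolic generators, the asymptotic estimate of Lemma \ref{almost ^2} for $\log|(\gamma^l)'|$, and continuity of the pressure function in $(q,b)$ near $(0,s)$ (valid by Lemma \ref{Lemma convergence}, since $(0,s)$ with $s>1/2$ lies in the interior of $\mathcal F$) will together close the argument.
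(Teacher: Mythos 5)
There is a genuine gap, and it is exactly the one you flag yourself as "the hard part": your lower bound via the $H_0$-subsystem cannot pin the limit to $0$. Writing $P_{H_0}$ for the pressure of the finite subsystem on the hyperbolic symbols, your inequality $-q(\alpha)\alpha=P(-q(\alpha)a_1-b(\alpha)\log|\tilde f'|)\geq P_{H_0}(-b(\alpha)\log|\tilde f'|)$ is correct (measures supported on the subsystem are admissible in the variational principle, and $a_1$ vanishes there), but the right-hand side does \emph{not} converge to $P(-s\log|\tilde f'|)=0$; it converges to $P_{H_0}(-s\log|\tilde f'|)$, which is \emph{strictly negative}. Indeed, by Bowen's formula for the subsystem, the zero of $t\mapsto P_{H_0}(-t\log|\tilde f'|)$ is the Hausdorff dimension of the limit set of the subgroup generated by the hyperbolic generators alone, which is strictly smaller than $s$, and this pressure function is strictly decreasing in $t$; hence $P_{H_0}(-s\log|\tilde f'|)<0$. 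Your argument therefore yields only $\limsup_{\alpha\to\infty}q(\alpha)\alpha\leq -P_{H_0}(-s\log|\tilde f'|)$, a positive constant, i.e.\ boundedness of $q(\alpha)\alpha$ rather than convergence to $0$. The sentence claiming the subsystem pressure is "controlled by $P(-s\log|\tilde f'|)=0$" is where the proof breaks.

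The paper closes this in one line by \emph{not} passing to a subsystem: by Proposition \ref{prop more important proposition}, $-q(\alpha)\alpha=P(-q(\alpha)a_1-b(\alpha)\log|\tilde f'|)$, and since $(q(\alpha),b(\alpha))\to(0,s)$ (Lemma \ref{lemma asymptotic behavior of q} and Theorem \ref{Theorem main clain of previous paper}) with $(0,s)\in\mathcal{F}$ because $s>1/2$ (Lemma \ref{Lemma convergence}, Theorem \ref{Theorem Beardon}), continuity of the full pressure in the parameters gives $P(-q(\alpha)a_1-b(\alpha)\log|\tilde f'|)\to P(-s\log|\tilde f'|)=0$ by Theorem \ref{Theorem useful Bowen}; this is a two-sided limit, so no positivity argument is even needed. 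If you insist on a subsystem route, the fix is to exhaust the alphabet by the finite subalphabets $H_0\cup\{\gamma^{l}h: l\leq N\}$: on each such subsystem $a_1$ is bounded by $N$ and $\log|\tilde f'|$ is bounded, so your lower-bound argument gives $\liminf_{\alpha\to\infty}P(-q(\alpha)a_1-b(\alpha)\log|\tilde f'|)\geq P_{\Sigma_N}(-s\log|\tilde f'|)$ for every $N$, and the approximation property of the pressure for finitely primitive countable Markov shifts gives $\sup_N P_{\Sigma_N}(-s\log|\tilde f'|)=P(-s\log|\tilde f'|)=0$; only then does your one-sided bound combined with $q(\alpha)\alpha>0$ finish the proof. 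Your preliminary computation with $\int a_1\,d\mu_{q(\alpha)}=\alpha$ is correct but plays no role and can be dropped.
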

    \begin{proof}
    By Theorem \ref{Theorem main clain of previous paper} and Theorem \ref{Theorem Beardon}, there exists $M>0$ such that for all $\alpha\geq M$ we have $b(\alpha)>1/2$. By Proposition \ref{prop more important proposition}, we have $-q(\alpha)\alpha=P(-q(\alpha)a_1-b(\alpha)\log|\tilde f'|)$ for all $\alpha \in (M,\infty)$. Therefore, by Lemma \ref{lemma asymptotic behavior of q}, Theorem \ref{Theorem useful Bowen} and continuity of the topological pressure, we obtain 
    $\lim_{\alpha\to\infty}(-q(\alpha)\alpha)
    =\lim_{\alpha\to\infty}P(-q(\alpha)a_1-b(\alpha)\log|\tilde f'|)
    =P(-s\log|\tilde f'|)=0.$  
    \end{proof}
  The asymptotic behavior of the function $\alpha \mapsto b(\alpha)$ when $\alpha$ goes to $\infty$ is associated with the asymptotic behavior of the function $\alpha\mapsto q(\alpha)$ when $\alpha$ goes to $\infty$.
\begin{lemma}\label{Lemma relationship q and b}
    There exist constants $B\geq1$ and $W>0$ such that for all $\alpha\in(W,\infty)$ we have
    \begin{align*}
        \frac{1}{B}\leq \frac{s-b(\alpha)}{\int_\alpha^\infty q(t)dt}\leq B.
    \end{align*}
    \end{lemma}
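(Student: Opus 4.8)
The plan is to produce an exact formula for $b'(\alpha)$ and then integrate it from $\alpha$ to $\infty$. Since $b$ and $q$ are real-analytic on $(0,\infty)$ (Theorem \ref{Theorem main clain of previous paper}) and, by Ruelle's formula (\ref{eq Ruelle}), the map $(\alpha,q,b)\mapsto p(\alpha,q,b)$ is $C^1$ along the relevant curve with known partials, I may differentiate the identity $p(\alpha,q(\alpha),b(\alpha))=0$ of Proposition \ref{prop more important proposition} in $\alpha$. Writing $\phi_\alpha:=q(\alpha)(-a_1+\alpha)-b(\alpha)\log|\tilde f'|$ and using that
\begin{align*}
p(\alpha,q,b)=P(-qa_1-b\log|\tilde f'|)+q\alpha
\end{align*}
depends on $\alpha$ both explicitly and through $q(\alpha),b(\alpha)$, the chain rule gives $\partial_\alpha p+(\partial_q p)\,q'(\alpha)+(\partial_b p)\,b'(\alpha)=0$. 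The second identity of Proposition \ref{prop more important proposition} annihilates the middle term, while $\partial_\alpha p=q(\alpha)$ (the additive constant $q\alpha$) and (\ref{eq Ruelle}) give $\partial_b p=-\int\log|\tilde f'|\,d\mu_\alpha=:-\lambda(\alpha)$, where $\mu_\alpha$ is the equilibrium state of $\phi_\alpha$. Hence I obtain the clean identity
\begin{align*}
b'(\alpha)=\frac{q(\alpha)}{\lambda(\alpha)}.
\end{align*}

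Since $b$ is increasing with $\lim_{\alpha\to\infty}b(\alpha)=s$ (Theorem \ref{Theorem main clain of previous paper}) and $b'=q/\lambda\geq0$, integrating the displayed identity yields
\begin{align*}
s-b(\alpha)=\int_\alpha^\infty b'(t)\,dt=\int_\alpha^\infty\frac{q(t)}{\lambda(t)}\,dt.
\end{align*}
Thus the quotient in the statement is a $q$-weighted average of $1/\lambda(t)$ over $[\alpha,\infty)$, and the lemma reduces to showing that the Lyapunov exponent $\lambda(t)$ is pinched between two positive constants for all large $t$: if $\lambda_-\leq\lambda(t)\leq\lambda_+$ for $t\geq W$, then the quotient lies in $[1/\lambda_+,1/\lambda_-]$, and one takes $B:=\max\{\lambda_+,1/\lambda_-\}$, which is automatically $\geq1$.

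The lower bound $\lambda(t)\geq\lambda_->0$ is immediate, since $\tilde f$ is uniformly expanding (Remark \ref{Remark uniformaly expanding}), so $\log|\tilde f'|$ is bounded below by a positive constant. The substance of the argument is the uniform upper bound, which I expect to be the main obstacle. Here I would invoke the Gibbs property of $\mu_\alpha$ (Theorem \ref{Theorem Gibbs measure}) together with $P(\phi_\alpha)=0$: for a symbol $\gamma^{l}h$ the mass $\mu_\alpha([\gamma^{l}h])$ is comparable to $\exp(\sup_{[\gamma^{l}h]}\phi_\alpha)$, and by Lemma \ref{almost ^2} together with (\ref{eq uniformaliy bound }) one has $\log|\tilde f'|=2\log l+O(1)$ and $a_1=l+O(1)$ on this cylinder, so this mass is comparable to $e^{q(\alpha)\alpha}e^{-q(\alpha)l}l^{-2b(\alpha)}$. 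Because $q(\alpha)\alpha\to0$ (Lemma \ref{Lemma the relationship q and alpha}) and $b(\alpha)\to s>1/2$, for $\alpha$ large there is $b_0>1/2$ with $b(\alpha)\geq b_0$, whence
\begin{align*}
\lambda(\alpha)=\int\log|\tilde f'|\,d\mu_\alpha\lesssim 1+\sum_{l\geq1}(\log l)\,e^{-q(\alpha)l}l^{-2b(\alpha)}\leq 1+\sum_{l\geq1}(\log l)\,l^{-2b_0}<\infty
\end{align*}
uniformly in $\alpha$, the last sum converging because $2b_0>1$.

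The delicate point in this last step is that the comparability constants, namely the Gibbs constant of Theorem \ref{Theorem Gibbs measure} and the distortion constant $K$ of Lemma \ref{almost ^2}, must be chosen uniformly in $\alpha$. This holds because the potentials $\phi_\alpha$ are uniformly weakly H\"older for large $\alpha$ (as $q(\alpha)$ and $b(\alpha)$ remain bounded), which yields a Gibbs constant independent of $\alpha$, while $K$ is already independent of $\alpha$. Finally, combining this uniform tail bound valid for $t$ beyond some threshold with the continuity of $\lambda$ on the remaining compact interval gives $\sup_{t\geq W}\lambda(t)<\infty$; together with the lower bound this completes the proof.
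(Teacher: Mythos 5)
Your proposal is correct and takes essentially the same route as the paper: the paper likewise differentiates the identity $p(\alpha,q(\alpha),b(\alpha))=0$ (using Ruelle's formula and the vanishing of $\partial_q p$) to get $b'(\alpha)=q(\alpha)/\lambda(\mu_\alpha)$, integrates to obtain $s-b(\alpha)=\int_\alpha^\infty \lambda(\mu_t)^{-1}q(t)\,dt$, and then pinches $\lambda(\mu_\alpha)$ between positive constants, with the upper bound coming from the Gibbs property (uniform in $\alpha$, which the paper gets from a cited theorem and you get from uniform weak H\"older bounds), Lemma \ref{almost ^2}, $q(\alpha)\alpha\to 0$, and the convergence of $\sum_l(\log l)\,l^{-2b_0}$ for $b_0>1/2$, and the lower bound from uniform expansion of $\tilde f$. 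The only differences are cosmetic (you differentiate $p$ directly rather than the exponentiated identity).
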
    
\begin{proof}
    By Proposition \ref{prop more important proposition}, for all $\alpha\in(0,\infty)$ we have $\exp(P(-q(\alpha)a_1-b(\alpha)\log|\tilde f'|))=\exp(-\alpha q(\alpha))$. Differentiating this equation with respect to $\alpha$ and using (\ref{eq Ruelle}), we obtain 
       $(-b'(\alpha)\lambda(\mu_{\alpha})-\alpha q'(\alpha))\exp\left(-\alpha q(\alpha)\right)
       =(-q(\alpha)-\alpha q'(\alpha))\exp\left(-\alpha q(\alpha)\right)$
    and thus, $b'(\alpha)
        =q(\alpha)/\lambda(\mu_\alpha)$,
    where $\mu_{\alpha}$ denotes the equilibrium measure of the potential $-q(\alpha)(-a_1+\alpha)-b(\alpha)\log|\tilde f'|$.
    Therefore, by Theorem \ref{Theorem main clain of previous paper}, for all $\alpha\in(0,\infty)$ we obtain
    \begin{align}\label{eq relationship b and q}
        s-b(\alpha)=\int_{\alpha}^\infty\lambda(\mu_t)^{-1} q(t)dt.
    \end{align}
    Next, we show that there exist constants $C\geq 1$ and $W>0$ such that for all $\alpha\in( W,\infty)$ we have $\lambda(\mu_{\alpha})\leq C$.  By Theorem \ref{Theorem main clain of previous paper} and  Lemma \ref{Lemma the relationship q and alpha} and Theorem \ref{Theorem Gibbs measure}, for all $\alpha\in (0,\infty)$ we have $p(\alpha,q(\alpha),b(\alpha))=0$, $\lim_{\alpha\to\infty}b(\alpha)=s$, $\lim_{\alpha\to\infty}\alpha q(\alpha)=0$ and $\mu_{\alpha}$ be a Gibbs measure for the potential $-q(\alpha)(-a_1+\alpha)-b(\alpha)\log|\tilde f'|$. Therefore, by \cite[Theorem 17.6.4]{urubanskinoninvertible}, there exist  constants $C\geq 1$ and $ W_1>0$ such that for all $\alpha\in( W_1,\infty)$ and $\omega\in\mathcal{A}$ we have
    \begin{align*}
        \frac{\mu_\alpha(\pi[\omega])}{\exp(\sup_{\pi([\omega])}\{-q(\alpha)a_1-b(\alpha)\log|\tilde f'|\})}\leq C.
        \end{align*}
    By Theorem \ref{Theorem Beardon}, we can take a small $\epsilon\in(0,1)$ such that $1/2<s-\epsilon$. By Lemma \ref{lemma asymptotic behavior of q}
    and Theorem \ref{Theorem main clain of previous paper}, there exists a constant $W_2>W_1$ such that for all $\alpha\in(W_2,\infty)$ we have $0<q(\alpha)<1$ and $s-b(\alpha)<\epsilon$.
    By Lemma \ref{almost ^2}, there exist constants $M\geq1$ and $W>W_2$ such that for all $\alpha\in(M,\infty)$, $h\in H_0$, $\gamma\in \Gamma_0$ and $l\geq 1$ we have
    \begin{align*}
         \frac{\sup_{\pi([\gamma^lh])}\{\log|\tilde f'|\}}{\log l^2}\leq M
    \end{align*}
    and \begin{align*}
        \frac{\exp\left(\sup_{\pi([\gamma^lh])}\{-b(\alpha)\log|\tilde f'|\}\right)}{l^{-2b(\alpha)}}\leq M.
    \end{align*}
    Thus, since $s-\epsilon>1/2$, for all $\alpha\in(W,\infty)$ we obtain 
    \begin{align*}
    &\lambda(\mu_\alpha)
    =\sum_{h\in H_0,\gamma\in\Gamma_0}\sum_{l=0}^\infty\int_{\pi([\gamma^lh])}\log|\tilde f'|d\mu_{\alpha} 
    \\&\leq \sum_{h\in H_0,\gamma\in\Gamma_0}\sum_{l=0}^\infty\sup_{\pi([\gamma^lh])}\{\log|\tilde f'|\}\mu_{\alpha}(\pi([\gamma^lh]))
    \\&\leq C\sum_{h\in H_0,\gamma\in\Gamma_0}\sum_{l=0}^\infty\sup_{\pi([\gamma^lh])}\{\log|\tilde f'|\}\exp(-q(\alpha)l)\exp\left(\sup_{\pi([\gamma^lh])}\{-b(\alpha)\log|\tilde f'|\}\right)
    \\&\leq CM^2\sum_{h\in H_0,\gamma\in\Gamma_0}\sum_{l=0}^\infty(2\log l)\exp(-q(\alpha)l)l^{-2b(\alpha)}
    \\&
    \leq CM^2\sum_{h\in H_0,\gamma\in\Gamma_0}\sum_{l=0}^\infty(2\log l)l^{-2(s-\epsilon)}<\infty.
    \end{align*}
    {Since $\tilde f$ is uniformly expanding, there exists $c>0$ such that for all $\alpha\in(0,\infty)$ we have $c<\lambda(\mu_\alpha)$ . Therefore, the proof is complete.}
\end{proof}
The following lemma is shown by a standard argument using the Gibbs property, but it connects  \cite{iommi2015multifractal} and  \cite{cesaratto2006hausdorff}. 
\begin{lemma}\label{Lemma Dirichlet seires}
     There exist constants $C\geq 1$ and $Y>0$ such that for all $\alpha\in(Y,\infty)$ we have 
    \begin{align*}
        \frac{1}{C}\leq\frac{\sum_{l=1}^\infty\exp(-lq(\alpha)){l^{1-2b(\alpha)}}}{\alpha}<C
    \end{align*}
\end{lemma}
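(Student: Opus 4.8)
The plan is to recognize the Dirichlet series as a constant multiple of the integral $\int a_1\,d\mu_\alpha$, where $\mu_\alpha$ denotes the equilibrium (Gibbs) measure of the potential $\phi_\alpha:=q(\alpha)(-a_1+\alpha)-b(\alpha)\log|\tilde f'|$, and then to use that this integral is forced to equal $\alpha$. Indeed, by Proposition \ref{prop more important proposition} we have the critical point condition $\frac{\partial}{\partial q}p(\alpha,q(\alpha),b(\alpha))=0$, and Ruelle's formula (\ref{eq Ruelle}) identifies the left-hand side with $\int(-a_1+\alpha)\,d\mu_\alpha$. Since $\mu_\alpha$ is a probability measure, this gives the exact identity $\int a_1\,d\mu_\alpha=\alpha$. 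The rest of the argument consists of evaluating $\int a_1\,d\mu_\alpha$ against the countable Markov partition $\{\pi([\omega])\}_{\omega\in\mathcal A}$ and reading off the Dirichlet series.

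I would next make the partition sum explicit using two ingredients. First, $P(\phi_\alpha)=p(\alpha,q(\alpha),b(\alpha))=0$ by Proposition \ref{prop more important proposition}, so the Gibbs property for $\mu_\alpha$ with pressure $0$ yields, for a single symbol $\gamma^l h\in\mathcal A$ (on which $a_1$ is constant and comparable to $l$),
\[
\mu_\alpha(\pi[\gamma^l h])\asymp \exp\Big(\sup_{\pi[\gamma^l h]}\phi_\alpha\Big)=e^{q(\alpha)\alpha}\,e^{-q(\alpha)\,a_1|_{[\gamma^l h]}}\,\exp\Big(\sup_{\pi[\gamma^l h]}(-b(\alpha)\log|\tilde f'|)\Big).
\]
Second, Lemma \ref{almost ^2} together with (\ref{eq uniformaliy bound }) gives $\log|\tilde f'|=2\log l+O(1)$ on $\pi[\gamma^l h]$ uniformly, so that $\exp(\sup(-b(\alpha)\log|\tilde f'|))\asymp l^{-2b(\alpha)}$ with comparison constants independent of $\alpha$ (here $0<b(\alpha)<1$ keeps these constants bounded). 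Combining the two gives $\mu_\alpha(\pi[\gamma^l h])\asymp e^{q(\alpha)\alpha}\,e^{-q(\alpha)l}\,l^{-2b(\alpha)}$. Since $a_1\equiv 0$ on the cylinders $\pi[h]$ with $h\in H_0$, summing the identity $\alpha=\int a_1\,d\mu_\alpha$ over the partition gives $\alpha\asymp e^{q(\alpha)\alpha}\sum_{l=1}^\infty e^{-q(\alpha)l}\,l^{1-2b(\alpha)}$, the cardinality $\#\Gamma_0\cdot\#H_0$ being absorbed into the constant. Finally, Lemma \ref{Lemma the relationship q and alpha} gives $q(\alpha)\alpha\to 0$, so $e^{q(\alpha)\alpha}$ stays bounded away from $0$ and $\infty$ for $\alpha$ large, and the claimed comparability follows.

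The main obstacle is the uniformity of the Gibbs comparison constants as $\alpha\to\infty$, since the potential $\phi_\alpha$ itself varies with $\alpha$. This is precisely where I would invoke \cite[Theorem 17.6.4]{urubanskinoninvertible}, exactly as in the proof of Lemma \ref{Lemma relationship q and b}: its hypotheses ($P(\phi_\alpha)=0$, $b(\alpha)\to s$, and $q(\alpha)\alpha\to 0$) are already available, and it delivers a two-sided Gibbs estimate with constants uniform in $\alpha$ for large $\alpha$. A secondary, purely bookkeeping, point is that $a_1$ may equal $l-1$ rather than $l$ on $[\gamma^l h]$; this does not affect comparability, because $1-2b(\alpha)>-1$ forces the $l$-weighted series $\sum_l e^{-q(\alpha)l}l^{1-2b(\alpha)}$ (which diverges as $q(\alpha)\to 0$) to dominate the unweighted sum $\sum_l e^{-q(\alpha)l}l^{-2b(\alpha)}$ (which stays bounded as $q(\alpha)\to 0$ since $2b(\alpha)\to 2s>1$), so replacing $l$ by $l-1$ alters the total only by a negligible amount.
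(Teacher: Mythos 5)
Your proposal is correct and follows essentially the same route as the paper's proof: the identity $\int a_1\,d\mu_\alpha=\alpha$ from $\frac{\partial}{\partial q}p(\alpha,q(\alpha),b(\alpha))=0$ and (\ref{eq Ruelle}), the $\alpha$-uniform Gibbs estimates obtained via \cite[Theorem 17.6.4]{urubanskinoninvertible} as in Lemma \ref{Lemma relationship q and b}, and Lemma \ref{almost ^2} to convert the derivative factor into $l^{-2b(\alpha)}$ before summing over the partition. The only deviations are bookkeeping ones the paper sidesteps by indexing cylinders as $[\gamma^{l+1}h]$ (so that $a_1=l$ exactly) and by stating the Gibbs bound relative to the potential $-q(\alpha)a_1-b(\alpha)\log|\tilde f'|$, which silently absorbs the bounded factor $e^{q(\alpha)\alpha}$ that you handle explicitly via Lemma \ref{Lemma the relationship q and alpha}; your treatment of both points is valid.
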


    \begin{proof}
        By Proposition \ref{prop more important proposition}, for all $\alpha\in(0,\infty)$ we have $(\partial/\partial q)p(\alpha,q(\alpha),b(\alpha))=\int (-a_1+\alpha)d\mu_{\alpha}=0$, where $\mu_{\alpha}$ denotes the equilibrium measure of the potential $q(\alpha)(-a_1+\alpha)-b(\alpha)\log|\tilde f'|$. Thus, we have $\int a_1d\mu_\alpha=\alpha$. Repeating the argument in the proof of Lemma \ref{Lemma relationship q and b}, there exist a constants $R,V\geq 1$ and $Y>0$ such that for all $\alpha\in(Y,\infty)$ and $\omega\in\mathcal{A}$ we have
    \begin{align*}
        \frac{1}{R}\leq\frac{\mu_\alpha(\pi[\omega])}{\exp(\sup_{\pi([\omega])}\{-q(\alpha)a_1-b(\alpha)\log|\tilde f'|\})}\leq R
    \end{align*}
    and \begin{align*}
        \frac{1}{V}\leq \frac{\exp\left(\sup_{\pi([\gamma^{l+1}h])}\{-b(\alpha)\log|\tilde f'|\}\right)}{l^{-2b(\alpha)}}\leq V.
    \end{align*}
    Thus, for all $\alpha\in(Y,\infty)$ we obtain
    \begin{align*}
        &\alpha=
        \int a_1d\mu_{\alpha}
        =\sum_{h\in H_0,\gamma\in\Gamma_0}\sum_{l=1}^\infty l\mu_{\alpha}(\pi([\gamma^{l+1}h]))
        \\& \leq R\sum_{h\in H_0,\gamma\in\Gamma_0}\sum_{l=1}^\infty l\exp\left(\sup_{\pi([\gamma^{l+1}h])}\{-q(\alpha)l-b(\alpha)\log|\tilde f'|\}\right)
        \\&\leq RV\sum_{h\in H_0,\gamma\in\Gamma_0}\sum_{l=1}^\infty \exp(-q(\alpha)l)l^{1-2b(\alpha)}.
    \end{align*}
    Repeating the above argument, for all $\alpha\in(Y,\infty)$ we obtain
    \begin{align*}
        \alpha=
        \int a_1d\mu_{\alpha}
        \geq \frac{1}{RV}\sum_{h\in H_0,\gamma\in\Gamma_0}\sum_{l=1}^\infty \exp(-q(\alpha)l)l^{1-2b(\alpha)}.
        \end{align*}
        Since $H_0$ and $\Gamma_0$ are finite sets, the proof is complete.
    \end{proof}
    We denote by $\Gamma$ the Gamma function and by $\zeta$ the zeta function.
\begin{prop}\label{proposition main}
    There exist constants $Z\geq 1$ and $Q>0$ such that for all $\alpha\in(Q,\infty)$ we have
    \begin{align*}
        \frac{1}{Z}\leq\frac{\sum_{l=1}^\infty\exp(-lq(\alpha)){l^{1-2b(\alpha)}}}{q(\alpha)^{-2+2b(\alpha)}}\leq Z
    \end{align*}
 
\end{prop}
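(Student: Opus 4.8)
The plan is to study the Dirichlet series
\[
D(u,\beta):=\sum_{l=1}^{\infty}e^{-lu}\,l^{\beta}
\]
in the regime $u=q(\alpha)$, $\beta=1-2b(\alpha)$. By Lemma~\ref{lemma asymptotic behavior of q}, $u\to 0^{+}$ as $\alpha\to\infty$; by Theorem~\ref{Theorem main clain of previous paper} and Theorem~\ref{Theorem Beardon}, $b(\alpha)\to s\in(1/2,1)$, so $\beta\to 1-2s\in(-1,0)$ and $\beta+1=2-2b(\alpha)\to 2-2s\in(0,1)$. Since the denominator in the statement equals $u^{-(2-2b(\alpha))}=u^{-(\beta+1)}$, the proposition is exactly the uniform two-sided bound $D(u,\beta)\asymp u^{-(\beta+1)}$ for all large $\alpha$. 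First I would fix $\delta>0$ with $2-2s\in(2\delta,1-2\delta)$ and pick $Q$ so large that $\beta+1\in[2\delta,1-2\delta]$ and $0<u<1$ whenever $\alpha>Q$; this confines the exponent $\beta+1$ to a compact subinterval of $(0,1)$, which is what makes every constant below uniform.

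The core estimate rests on the Gamma integral, namely
\[
\int_{0}^{\infty}e^{-ux}x^{\beta}\,dx=\Gamma(\beta+1)\,u^{-(\beta+1)} .
\]
For $\beta\in(-1,0)$ the integrand $g(x)=e^{-ux}x^{\beta}$ is strictly decreasing on $(0,\infty)$, since $g'(x)$ has the sign of $\beta-ux<0$, so comparing the series with the integral gives
\[
\int_{1}^{\infty}g(x)\,dx\le D(u,\beta)\le g(1)+\int_{1}^{\infty}g(x)\,dx .
\]
Writing $\int_{1}^{\infty}g=\Gamma(\beta+1)u^{-(\beta+1)}-\int_{0}^{1}e^{-ux}x^{\beta}\,dx$ and bounding $0\le\int_{0}^{1}e^{-ux}x^{\beta}\,dx\le\int_{0}^{1}x^{\beta}\,dx=1/(\beta+1)$ together with $g(1)=e^{-u}\le 1$, I would obtain
\[
\Gamma(\beta+1)-\frac{u^{\beta+1}}{\beta+1}\le\frac{D(u,\beta)}{u^{-(\beta+1)}}\le\Gamma(\beta+1)+u^{\beta+1}.
\]

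It then remains to pass to uniform constants and conclude. On the range $\alpha>Q$ the exponent $\beta+1$ stays in the compact interval $[2\delta,1-2\delta]\subset(0,1)$, so $\Gamma(\beta+1)$ is bounded between two positive constants and $1/(\beta+1)$ is bounded, while $u^{\beta+1}\le u^{2\delta}\to 0$ because $0<u<1$; hence both correction terms vanish uniformly as $\alpha\to\infty$. Since $\Gamma(\beta+1)\to\Gamma(2-2s)\in(0,\infty)$, for $\alpha$ large the displayed ratio is trapped between two fixed positive constants, which delivers the required $Z$ and $Q$. The hard part will be exactly this uniformity in $\beta=1-2b(\alpha)$: it is what forces the restriction to a compact exponent range, and it is where the hypothesis $1/2<s<1$ of Theorem~\ref{Theorem Beardon} is indispensable, since it keeps $\beta+1$ bounded away from both $0$ and $1$. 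I would finally note that the same leading behaviour follows from the Mellin inversion formula $D(u,\beta)=\frac{1}{2\pi i}\int_{(c)}\Gamma(w)\,\zeta(w-\beta)\,u^{-w}\,dw$ with $c>\beta+1$: shifting the contour past the simple pole of $\zeta(w-\beta)$ at $w=\beta+1$ isolates the main term $\Gamma(\beta+1)u^{-(\beta+1)}$, the poles of $\Gamma$ and the remaining integral contributing only lower order in $u$; this is the route through which the zeta function enters explicitly.
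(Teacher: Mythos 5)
Your argument is correct and establishes exactly the claimed two-sided bound, but it proceeds by a genuinely different and more elementary route than the paper. The paper computes the Mellin transform $\Gamma(u)\zeta(2b(\alpha)-1+u)$ of the series $K_{b(\alpha)}(q)=\sum_{l\geq1}e^{-lq}l^{1-2b(\alpha)}$, applies Mellin inversion, and shifts the contour to the left across the simple poles at $u=2-2b(\alpha)$ (from $\zeta$, giving the main term $\Gamma(2-2b(\alpha))q^{-2+2b(\alpha)}$) and at $u=0$ (from $\Gamma$, giving the constant $\zeta(2b(\alpha)-1)$), and then controls the remaining vertical-line integral by the exponential decay estimate (\ref{eq: convergence rate to 0}) of $\Gamma\cdot\zeta$ on vertical lines; Lemma \ref{lemma asymptotic behavior of q} and Theorem \ref{Theorem main clain of previous paper} then show the last two terms are $o\bigl(q(\alpha)^{-2+2b(\alpha)}\bigr)$. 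You bypass the zeta function, its analytic continuation and all contour integration: comparing the sum with the Gamma integral $\int_0^\infty e^{-ux}x^{\beta}\,dx=\Gamma(\beta+1)u^{-(\beta+1)}$ via monotonicity of $x\mapsto e^{-ux}x^{\beta}$ (valid precisely because $\beta=1-2b(\alpha)\in(-1,0)$ in this regime), you obtain the same main term with correction terms bounded by $1$ and $1/(\beta+1)$, hence negligible against $u^{-(\beta+1)}$. Both proofs rest on the same three inputs --- $q(\alpha)\to0$ (Lemma \ref{lemma asymptotic behavior of q}), $b(\alpha)\to s$ (Theorem \ref{Theorem main clain of previous paper}), and $1/2<s<1$ (Theorem \ref{Theorem Beardon}) --- the last being what pins $\beta+1=2-2b(\alpha)$ inside a compact subinterval of $(0,1)$, so that $\Gamma(\beta+1)$ stays between positive constants and $u^{\beta+1}\leq u^{2\delta}\to0$ uniformly. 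Your handling of this uniformity is explicit and clean, whereas the paper's route requires, in addition, that the constants in (\ref{eq: convergence rate to 0}) can be taken uniformly in $\alpha$ as the argument $2b(\alpha)-1$ of $\zeta$ varies, a point the paper treats only implicitly. What the Mellin route buys in exchange is a finer asymptotic expansion --- the explicit second-order constant $\zeta(2b(\alpha)-1)$ and a remainder of smaller polynomial order --- which is more than Proposition \ref{proposition main} actually needs, but matches the precision of the Hensley and Cesaratto--Vall\'ee results that the paper compares against. Your closing remark correctly identifies the Mellin-inversion argument as an alternative; it is in fact the paper's own proof.
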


\begin{proof}
      By Theorem \ref{Theorem Beardon}, we can take a small $\epsilon\in(0,1/2)$ such that $1/2<s-\epsilon$ and $s+\epsilon<1$. By Theorem \ref{Theorem main clain of previous paper}, there exists a constant $W>0$ such that for all $\alpha\in(W,\infty)$ we have $s-\epsilon<b(\alpha)$. Let $\alpha\in(W,\infty)$.
      For $q\in(0,\infty)$ we put 
      \begin{align*}
      K_{b(\alpha)}(q):= \sum_{l=1}^\infty\exp(-lq)l^{1-2b(\alpha)}.    
      \end{align*}
       The Mellin transform of $K_{b(\alpha)}(q)$ is $K^*_{b(\alpha)}(u)=\Gamma(u)\zeta(2b(\alpha)-1+u)$ (see \cite[Theorem 5 and Example 9]{FLAJOLET19953}). 
{$\Gamma(u)$ has a pole of order $1$ at $u=0$ and $\zeta(2b(\alpha)-1+u)$ has a pole of order $1$ at $u=2-2b(\alpha)$.} Thus,
by Mellin inversion theorem (see \cite[Theorem 2]{FLAJOLET19953}), for $D>2-2b(\alpha)$ we have
\begin{align*}
    K_{b(\alpha)}(q)=\frac{1}{2\pi i}\int_{D-i\infty}^{D+i\infty}\Gamma(u)\zeta(2b(\alpha)-1+u)q^{-u}du.
\end{align*}
 Put $\delta:=2\epsilon-1$. Note that $-1<\delta<0$ and for all $\alpha\in(W,\infty)$ we have $-1<2b(\alpha)-1+\delta<0$  since $\delta<2b(\alpha)-1+\delta<2(s-(1-\epsilon))<0$.
 {By the choice of $W$, we have $2-2b(\alpha)<1$.
 We fix $2-2b(\alpha)<D<1$.
By \cite[p.25]{FLAJOLET19953}, there exist constants $\beta, T, C>0$ such that for all $\delta\leq x\leq D$ and $|t|\geq T$ we have} 
\begin{align}\label{eq: convergence rate to 0}
  {|\Gamma(x+it)\zeta(2b(\alpha)-1+x+it)|\leq C\exp(-\beta |t|).}
\end{align}
Moreover, the residue of $\Gamma(u)$ at $u=0$ is $1$ and the residue of $\zeta(2b(\alpha)-1+u)$ at $u=2-2b(\alpha)$ is also $1$.
Therefore, shifting the integration line to the left, we obtain 
\begin{align}\label{eq in proposition}
    K_{b(\alpha)}(q)=&{\Gamma(2-2b(\alpha))}{q^{-2+2b(\alpha)}}+\zeta(2b(\alpha)-1)
    \\&\nonumber+\frac{1}{2i\pi}\int_{\delta-i\infty}^{\delta+i\infty}\Gamma(u)\zeta(2b(\alpha)-1+u)q(\alpha)^{-u}du.
\end{align}
By Lemma \ref{lemma asymptotic behavior of q}, Theorem \ref{Theorem main clain of previous paper} and (\ref{eq: convergence rate to 0}), we have 
\begin{align}\label{eq last term limit}
    \lim_{\alpha\to\infty}q(\alpha)^{2-2b(\alpha)}\left|\frac{1}{2i\pi}\int_{\delta-i\infty}^{\delta+i\infty}\Gamma(u)\zeta(2b(\alpha)-1+u)q(\alpha)^{-u}du
    \right|=0.
\end{align}
Note that for all $\alpha\in(W,\infty)$ we have  $0<2(s-\epsilon)-1<2b(\alpha)-1<2(1-\epsilon)-1<1$. By the continuity of $\zeta$ on $[2(s-\epsilon)-1,2(1-\epsilon)-1]$, Lemma \ref{lemma asymptotic behavior of q} and Theorem \ref{Theorem main clain of previous paper}, we have
\begin{align}\label{eq zeta}
    \lim_{\alpha\to\infty}q(\alpha)^{2-2b(\alpha)}|\zeta(2b(\alpha)-1)|=0. 
\end{align}
Since $0<2-2s<1$, we have $\Gamma(2-2s)>0$. We take $\eta>0$ such that $\eta<\Gamma(2-2s)$. Since $\lim_{\alpha\to\infty}b(\alpha)=s$ and $\Gamma$ is continuous on a small neighborhood of $2-2s$, there exists $Y>W$ such that for all $\alpha\in(Y,\infty)$ we have $\eta<\Gamma(2-2b(\alpha))$. 
 Therefore, by (\ref{eq in proposition}), (\ref{eq last term limit}) and (\ref{eq zeta}), the proof is complete.
\end{proof}

{By Lemma \ref{Lemma Dirichlet seires} and Proposition \ref{proposition main}, we obtain the following proposition.} 
\begin{prop}\label{prop q and alpha}
    {There exist constants $Z\geq 1$ and $Q>0$ such that for all $\alpha \in (Q,\infty)$ we have} 
    \begin{align*}
        {\frac{1}{Z}\leq \frac{q(\alpha)}{\alpha^{1/(-2+2b(\alpha))}}
        \leq Z.}
    \end{align*}
\end{prop}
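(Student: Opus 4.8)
The plan is to feed the two comparabilities already established into one another and then invert. By Lemma~\ref{Lemma Dirichlet seires} there are constants $C\geq 1$ and $Y>0$ with
\[
\frac{1}{C}\leq\frac{\sum_{l=1}^\infty\exp(-lq(\alpha))l^{1-2b(\alpha)}}{\alpha}<C
\]
for $\alpha>Y$, while Proposition~\ref{proposition main} supplies constants $Z_0\geq 1$ and $Q_0>0$ with
\[
\frac{1}{Z_0}\leq\frac{\sum_{l=1}^\infty\exp(-lq(\alpha))l^{1-2b(\alpha)}}{q(\alpha)^{-2+2b(\alpha)}}\leq Z_0
\]
for $\alpha>Q_0$. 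Since both ratios share the same numerator, dividing one by the other eliminates the Dirichlet series entirely; writing $D:=CZ_0\geq 1$, for every $\alpha>\max\{Y,Q_0\}$ this yields the single comparability
\[
\frac{1}{D}\leq\frac{q(\alpha)^{-2+2b(\alpha)}}{\alpha}\leq D .
\]

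Next I would invert this relation by raising it to the power $1/(-2+2b(\alpha))$. The structural point making the inversion legitimate is that this exponent is negative: by Theorem~\ref{Theorem main clain of previous paper} and Theorem~\ref{Theorem Beardon} we have $b(\alpha)\to s$ with $s<1$, so $-2+2b(\alpha)\to 2s-2<0$, and in particular $-2+2b(\alpha)<0$ for all large $\alpha$. The map $t\mapsto t^{1/(-2+2b(\alpha))}$ is then strictly decreasing on $(0,\infty)$, so applying it to the displayed inequality reverses the direction, and after dividing through by $\alpha^{1/(-2+2b(\alpha))}$ one obtains
\[
D^{\,1/(-2+2b(\alpha))}\leq\frac{q(\alpha)}{\alpha^{1/(-2+2b(\alpha))}}\leq D^{\,-1/(-2+2b(\alpha))} .
\]

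The remaining, and really the only delicate, point is to check that the $\alpha$-dependent prefactors $D^{\pm 1/(-2+2b(\alpha))}$ stay uniformly bounded above and below by positive constants as $\alpha\to\infty$; this is the main obstacle, and it is exactly where $s<1$ is used in an essential way. Because $b(\alpha)\to s$, the exponent $1/(-2+2b(\alpha))$ converges to the finite negative number $1/(2s-2)$ and hence remains in a compact subinterval of $(-\infty,0)$ for all large $\alpha$. Since $D\geq 1$, the continuous function $x\mapsto D^{x}$ is bounded between two positive constants on that compact interval, so there is a constant $Z\geq 1$ with $1/Z\leq D^{1/(-2+2b(\alpha))}$ and $D^{-1/(-2+2b(\alpha))}\leq Z$ for all sufficiently large $\alpha$. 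Combining these bounds with the previous display produces the claimed two-sided estimate on some interval $(Q,\infty)$, which completes the argument. I expect that no further analytic input is needed here: once the exponent is controlled, everything reduces to monotone manipulation of the comparability inherited from Lemma~\ref{Lemma Dirichlet seires} and Proposition~\ref{proposition main}.
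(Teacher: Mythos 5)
Your proposal is correct and follows exactly the route the paper intends: the paper's proof is precisely the observation that Lemma \ref{Lemma Dirichlet seires} and Proposition \ref{proposition main} share the Dirichlet series $\sum_{l=1}^\infty\exp(-lq(\alpha))l^{1-2b(\alpha)}$ as a common comparison quantity, so dividing the two comparabilities and inverting with the negative exponent $1/(-2+2b(\alpha))$ gives the claim. Your additional care in checking that the prefactors $D^{\pm 1/(-2+2b(\alpha))}$ remain uniformly bounded because $b(\alpha)\to s<1$ (so the exponent stays in a compact subset of $(-\infty,0)$) is exactly the implicit step the paper leaves to the reader, and it is handled correctly.
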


\begin{pf}
By Lemma \ref{Lemma relationship q and b} and Proposition \ref{prop q and alpha}, there exist a constants $C\geq 1$ and $L>0$ such that for all $\alpha\in(L,\infty)$ we have
\begin{align}\label{eq comparable}
    \frac{1}{C}\leq\frac{s-b(\alpha)}{\int_{\alpha}^\infty t^{1/(-2+2b(t))}dt}\leq C.
\end{align}
By Theorem \ref{Theorem main clain of previous paper}, for all $x\leq 0$ we have $\lim_{\alpha\to\infty}(s-b(\alpha))\alpha^x=0$.

Let $x\in(0,{1}/({2-2s})-1)$. Then, there exists $\epsilon\in(0,2s-1)$ such that $x=1/(2-2s+\epsilon)-1$. By Theorem \ref{Theorem main clain of previous paper}, there exists a constant $W>L$ such that for all $\alpha\in(W,\infty)$ we have $s-\epsilon/2<b(\alpha)$. 
Since $\epsilon\in(0,2s-1)$, we have $1/2<s-\epsilon/2<1$ and 
\begin{align*}
\frac{1}{-2+2(s-\epsilon/2)}+1
=\frac{-1+2(s-\epsilon/2)}{-2+2(s-\epsilon/2)}<0.    
\end{align*}
Thus, since for all $\alpha\in(W,\infty)$ we have $1/(-2+2b(\alpha))<1/(-2+2(s-\epsilon/2))$, for all $\alpha\in(W,\infty)$ we obtain 
\begin{align*}
&\int_{\alpha}^\infty t^{{1}/{(-2+2b(t))}}dt
\leq\int_{\alpha}^\infty t^{{1}/{(-2+2(s-\epsilon/2))}}dt
\\&=-\left(\frac{1}{-2+2(s-\epsilon/2)}+1\right)\alpha^{{1}/({-2+2(s-\epsilon/2)})+1}.
\end{align*}
Hence, by (\ref{eq comparable}), we obtain $\lim_{\alpha\to\infty}(s-b(\alpha))\alpha^x=0$.

Let $x\in({1}/({2-2s})-1,\infty)$. There exists $\delta\in(0,\infty)$ such that $x={1}/({2-2s})-1+\delta$. 
By Theorem \ref{Theorem Beardon}, we have 
\begin{align*}
    \frac{1}{2s-2}-\frac{\delta}{2}+1=\frac{-1+2s-\delta(s-1)}{2s-2}<\frac{-1+2s}{2(s-1)}<0
\end{align*}
Thus, we obtain
\begin{align*}
&\int_{\alpha}^\infty t^{{1}/{(-2+2b(t))}}dt
\geq\int_{\alpha}^\infty t^{{1}/{(-2+2s)}-\delta/2}dt
\\&=-\left(\frac{1}{-2+2s}-\delta/2+1\right)\alpha^{{1}/{(-2+2s)}-\delta/2+1}.
\end{align*}
Hence, by (\ref{eq comparable}), we obtain $\lim_{\alpha\to\infty}(s-b(\alpha))\alpha^x=\infty$. \qed
\end{pf}

 \bibliographystyle{abbrv}
\bibliography{reference}
 \nocite{*}

\end{document}